\renewcommand{\baselinestretch}{1.1} 
\definecolor{darkred}{rgb}{0.2,0.25,0.75}
\tikzset{->-/.style={decoration={
  markings,
  mark=at position #1 with {\arrow{>}}},postaction={decorate}}}
\def\impact#1{\bgroup\narrower
\baselineskip
\footskip\bigbreak
\hrule\vspace{-0.1 in}\medskip\nobreak\noindent \begin{BI}
\renewcommand{\baselinestretch}{1.1}  {\it #1\/}\par\nobreak\end{BI}}
\def\endimpact{\medskip\nobreak \hrule\bigbreak\egroup}
\setlist{itemsep = 0.20em, topsep = 0.20em}
\declaretheoremstyle[spaceabove=0.25cm,spacebelow=0.25cm,notefont=\normalfont\bfseries, notebraces={(}{)}]{theorem}
\declaretheoremstyle[spaceabove=0.25cm,spacebelow=0.25cm,bodyfont=\normalfont,notefont=\normalfont\bfseries, notebraces={(}{)}]{noital}
\declaretheoremstyle[spaceabove=0.25cm,spacebelow=0.25cm,bodyfont=\normalfont\color{darkgreen},notefont=\normalfont\bfseries, notebraces={(}{)}]{green}
\declaretheoremstyle[spaceabove=0.25cm,spacebelow=0.25cm,bodyfont=\normalfont,notefont=\normalfont\bfseries,qed=$\qedsymbol$,notebraces={(}{)}]{proofstyle}
\newtheorem{proposition}{Proposition}[]
\newtheorem{theorem}[proposition]{Theorem} 
\newtheorem{corollary}[proposition]{Corollary} 
\newtheorem{definition}[proposition]{Definition} 
\declaretheorem[name=Problem,style=theorem]{problem}
\newtheorem{BI}{Broader Impact}[]
\numberwithin{equation}{section}
\newcommand{\CC}{\mathbb{C}}
\newcommand{\e}{{\mathrm e}}
\newcommand{\git}{\mathbin{
  \mathchoice{/\mkern-6mu/}
    {/\mkern-6mu/}
    {/\mkern-5mu/}
    {/\mkern-5mu/}}}
\newcommand{\hkq}{\mathbin{
  \mathchoice{/\mkern-6mu/\mkern-6mu/}
    {/\mkern-6mu/}
    {/\mkern-5mu/}
    {/\mkern-5mu/}}}
\begin{document}

\setcounter{page}{1}

\title[Moduli Spaces of Generalized Hyperpolygons]{Moduli Spaces of Generalized Hyperpolygons}

\author{Steven Rayan}
\address{Steven Rayan: Centre for Quantum Topology and Its Applications (quanTA) and Department of Mathematics \& Statistics, University of Saskatchewan, Saskatoon, SK, S7N 5E6, Canada}
\email{rayan@math.usask.ca}

\author{Laura P.~ Schaposnik}
\address{Laura P.~ Schaposnik: Mathematics, Statistics, and Computer Science, University of Illinois at Chicago, 60607 Chicago, Illinois, USA}
\email{schapos@uic.edu}

\date{\today}
\keywords{hyperpolygon, generalized hyperpolygon, comet-shaped quiver, star-shaped quiver, quiver variety, Nakajima quiver variety, hyperk\"ahler variety, Higgs bundle, character variety, integrable system, complete integrability, Gelfand-Tsetlin system, triple brane, mirror symmetry}
 
{\abstract{We introduce the notion of \emph{generalized hyperpolygon}, which arises as a representation, in the sense of Nakajima, of a comet-shaped quiver.  We identify these representations with rigid geometric figures, namely pairs of polygons: one in the Lie algebra of a compact group and the other in its complexification.  To such data, we associate an explicit meromorphic Higgs bundle on a genus-$g$ Riemann surface, where $g$ is the number of loops in the comet, thereby embedding the Nakajima quiver variety into a Hitchin system on a punctured genus-$g$ Riemann surface (generally with positive codimension).  We show that, under certain assumptions on flag types, the space of generalized hyperpolygons admits the structure of a completely integrable Hamiltonian system of Gelfand-Tsetlin type, inherited from the reduction of partial flag varieties.  In the case where all flags are complete, we present the Hamiltonians explictly. We also remark upon the discretization of the Hitchin equations given by hyperpolygons, the construction of triple branes (in the sense of Kapustin-Witten mirror symmetry), and dualities between tame and wild Hitchin systems (in the sense of Painlev\'e transcendents).}
}

\maketitle

\begin{center}\emph{In memory of Sir Michael Atiyah (1929-2019), an inspiration to geometers the world round.\\}\end{center}

\vspace{-0.1 in}
  
\tableofcontents

\section{Introduction}

One constant theme in the work of Michael Atiyah has been the interplay of algebra, geometry, and physics.  The construction of complete, asymptotically locally Euclidean (ALE), hyperk\"ahler $4$-manifolds --- in other words, of \emph{gravitational instantons} --- from a graph of Dynkin type is the capstone of a particular program for constructing K\"ahler-Einstein metrics, relevant to both geometry and physics and using only linear algebra.  This construction is at once the geometric realization of the McKay correspondence for finite subgroups of $\mbox{SU}(2)$ \cite{JM}, a generalization of the Gibbons-Hawking ansatz \cite{GH2}, and the analogue of the Atiyah-Drinfel'd-Hitchin-Manin technique \cite{ADHM} for constructing Yang-Mills instantons.  The construction completes a circle of ideas.  First, an instanton metric is determined, up to isometry and the integration of certain periods, by the metric on the tangent cone at infinity, as in Kronheimer \cite{PK}.  The metric data at infinity is given by a copy of $\mathbb C^2$ with the standard norm subjected to a Kleinian singularity.  The Kleinian singularity is produced by quotienting $\mathbb C^2$ by a finite subgroup $\Gamma<\mbox{SU}(2)$.  Finally, $\Gamma$ is determined by an affine ADE Dynkin diagram (or quiver).  Bringing this full circle, the quiver variety for the diagram, in an appropriate sense and with appropriate labels, is an instanton in the isometry class of the original one.

Historically, the Gibbons-Hawking ansatz (the type $A$ case of the above correspondences) and the ADHM method anctipate the Nakajima quiver variety construction \cite{HN}, which goes beyond instantons and $4$-manifolds.  The construction is a recipe for producing noncompact hyperk\"ahler varieties with arbitrarily large dimension from representation-theoretic data. Our present interest in these quiver varieties stems from some formal similarities between Nakajima quiver varieties and the \emph{Hitchin system}, which is an integrable system defined on the total space of the moduli space of semistable Higgs bundles on an algebraic curve.  The Hitchin system is a noncompact, complete hyperk\"ahler variety and, as with every Nakajima quiver variety, possesses an algebraic $\CC^\times$-action.  While Nakajima quiver varieties are finite-dimensional hyperk\"ahler quotients, the Hitchin system is an infinite-dimensional hyperk\"ahler one in the sense of \cite{MR877637}.   By some estimate, the Nakajima variety that comes ``closest'' to the parabolic Hitchin system in genus $0$ is the one arising from a star-shaped quiver, which is an object interlacing a number of $A$-type quivers.  This particular quiver variety can be regarded as a moduli space of so-called \emph{hyperpolygons}: this is both the hyperk\"ahler analogue of the moduli space of polygons studied, for instance, in \cite{MR1431002} and the ALE analogue of the moduli space of parabolic Higgs bundles at genus $0$.   Hyperpolygon spaces first appeared in \cite{MR1953356} and were studied from symplectic and toric points of view in \cite{MR2115372}.  

The connection between hyperpolygon spaces and rank-$2$ parabolic Higgs bundle moduli spaces at genus $0$ is initiated in the work of \cite{Ale,MR3424665}.  This was generalized to arbitrary rank in \cite{steve}, where the hyperpolygon space is explicitly identified with a degenerate locus of a corresponding Hitchin system.  They prove that this locus forms a sub-integrable system and furthermore show that the cohomology of the quiver variety has the hyperk\"ahler Kirwan surjectivity property (which was later established for all Nakajima quiver varieties in \cite{MR3773791}).  A compact version of the rank $3$ correspondence appears in \cite{MR3931789} while a version of this correspondence for logarithmic connections appears in \cite{MR3471271}.   A general overview of the relationship between hyperpolygons and Higgs bundles at genus $0$ is also provided in \cite{steveMFO}.

In this article, we extend this interaction between quiver varieties and Hitchin systems further by considering \emph{comet-shaped quivers}, which are star-shaped quivers with extra loops on the central vertex.  These are depicted in Fig. \ref{maps2}.  In addition to the extra loops, we allow arbitrary flags along each ``arm'' of the comet.

\begin{figure}[!h]
\begin{center}
\includegraphics[width=0.5\linewidth]{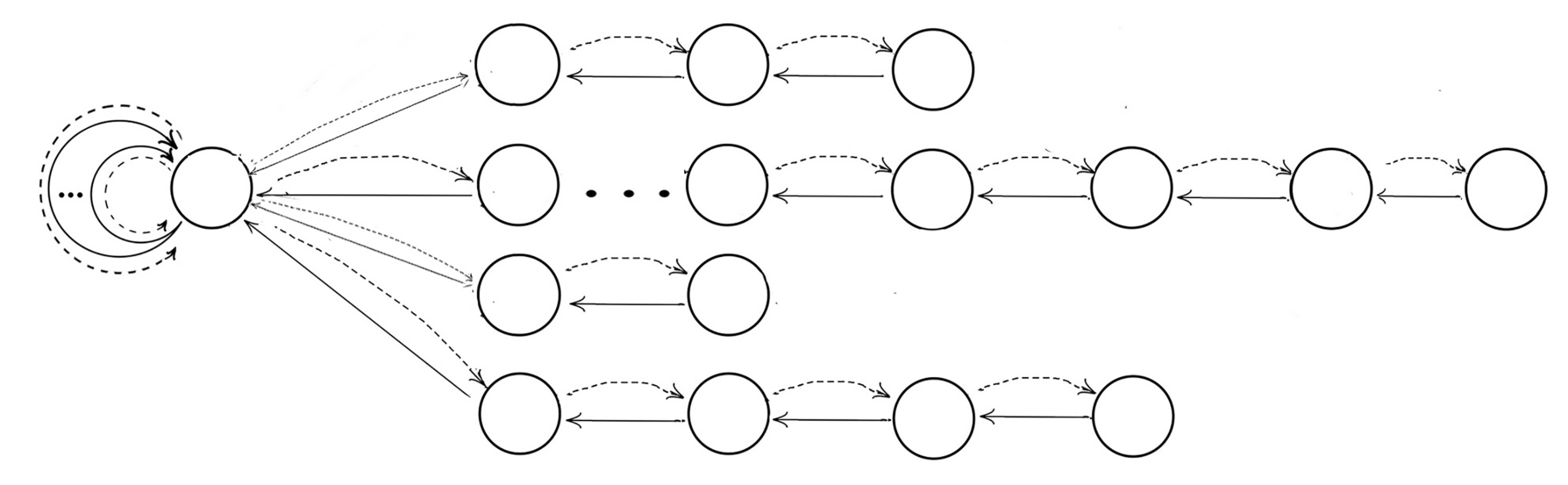}
\caption{A ``comet-shaped'' quiver. }\label{maps2}
\end{center}
\end{figure}

These quivers have been considered from a number of perspectives, such as in \cite{crawley2003matrices,hausel2011arithmetic}.  The work of \cite{hausel2011arithmetic} in particular suggests a close connection between Hitchin systems, character varieties, and comet-shaped quiver varieties.  The role of the extra loops is to increase the genus of the associated Hitchin system or character variety.   In this paper, we formalize this relationship.  After reviewing the construction of the Nakajima quiver variety associated to such a quiver, we describe classes in the Nakajima quiver variety associated to Fig. \ref{maps2} geometrically as \emph{generalized hyperpolygons}, which are pairs of polygons, one in the Lie algebra of a compact group and the other in its complexification. A Hamiltonian action of $\mbox{U}(1)$ is described, in analogy with the Hitchin system: it acts on one polygon while preserving the other, much as Higgs fields are rescaled while the holomorphic vector bundle is left invariant. We also remark upon how the real and complex moment map equations for generalized hyperpolygons are discrete analogues of the Hitchin equations, which to our knowledge has not been remarked upon in prior literature on hyperpolygons.  Motivated by these observations, we describe how to associate an explicit meromorphic Higgs bundle on a Riemann surface of genus $g$ to such a pair of polygons, where $g$ is the number of loops in the quiver, leading to an embedding map from the generalized hyperpolygon space into a moduli space of meromorphic, or equivalently twisted, Higgs bundles on a complex curve.  This is Theorem \ref{ThmEmbed}.  The image of the map generally has positive codimension and does not respect the hyperk\"ahler structures, as the Nakajima metric on the generalized hyperpolygon space is already complete.   We discuss a particular example, where the quiver has no loops and is the affine Dynkin diagram $\widetilde{\mbox{D}}_4$.  In this special case, the dimensions of the quiver variety and the associated Hitchin system are equal and the complement of the embedding is the Hitchin section.  

We then specialize to the case where each arm is either complete or minimal (where ``minimal'' means that the arm has exactly two nodes, an outer node and the central node, and the outer nodes are labelled ``$1$'').  In this case, we prove Theorem \ref{ThmIntegrable}: the generalized hyperpolygon space admits the structure of an algebraically-completely integrable Hamiltonian system of Gelfand-Tsetlin type.  We do this by appealing to the point of view that the quiver variety is a symplectic reduction of the product of cotangent bundles of partial flag varieties originating from the arms of the quiver, together with a product of contangent spaces of Lie algebras coming from the loop data.  Our argument relies essentially on being able to pass back and forth between the symplectic and geometric quotient.  As a result, we provide a maximal set of explicit, functionally-independent, Poisson-commuting Hamiltonians in the case where every arm is complete (Corollary \ref{CorPresent}).  This result is significant as it establishes the existence of explicit sub-integrable systems within parabolic Hitchin systems defined using only representation-theoretic data and not the complex structure on a Riemann surface. 

Finally, in Section \ref{final} we anticipdate two dualities involving hyperpolygons: mirror symmetry and a \emph{tame-wild} duality.  For the former, we consider the construction of triple branes in the moduli space of generalized hyperpolygons, as per the considerations of Kapustin-Witten \cite{Kap}.  For the latter, we discuss briefly an ambiguity between two types of comet quiver that leads to a passage from wild Higgs bundles to tame ones.  Both of these discussions anticipate further work.\\

\noindent\textbf{Acknowledgements.}  We thank D. Baraglia, I. Biswas, P. Crooks, J. Fisher, S. Gukov, T. Hausel, J. Kamnitzer, R. Mazzeo, H. Nakajima, A. Soibelman, J. Szmigielski, and H. Wei{\ss} for useful discussions related to this work.  S. Rayan is partially supported by an NSERC Discovery Grant, a Canadian Tri-Agency New Frontiers in Research Fund (Exploration) Grant, and a PIMS Collaborative Research Group (CRG) Grant.  L.P. Schaposnik is partially supported by the Humboldt Foundation, NSF Grant \#1509693, and NSF CAREER Award \#1749013.  Some formative steps in this work occurred during a research visit to the Fields Institute in May 2017 and during the Workshop on Singular Geometry and Higgs Bundles in String Theory at the American Institute of Mathematics in Fall 2017.  We appreciate the stimulating and productive research environments fostered by both institutes. Both authors are thankful to the Simons Center for Geometry and Physics for its hospitality during the Thematic Program on the Geometry and Physics of Hitchin Systems during January to June 2019, as well as to the Mathematisches Forschungsinstitut Oberwolfach where the authors had fruitful discussions with various participants of Workshop 1920. This material is also based upon work supported by the National Science Foundation under Grant No. DMS-1440140 while the authors were in residence at the Mathematical Sciences Research Institute in Berkeley, California, during the Fall 2019 semester.  Finally, we wish to acknowledge an anonymous referee for constructive feedback that led to improvements to the original manuscript.

\section{Review of Nakajima quiver varieties}\label{review}

The literature on Nakajima quiver varieties is, by now, more or less standard, although there are a few competing conventions.  We take a moment to establish ours.  We also feel it might be useful to establish the ``calculus'' of quiver moment maps --- that is, the rules out of which moment maps for group actions on the representation space of a quiver are built.

\subsection{Doubled quivers} Let $\mathcal Q$ be an ordinary quiver, meaning a directed graph with finitely-many vertices and directed edges.  We will use the terms ``vertex'' and ``node'' interchangeably; likewise, ``edge'' and ``arrow''.  We will denote the vertex set by $V(\mathcal Q)$ and the edge set by $E(\mathcal Q)$.  These data completely determine $\mathcal Q$ up to graph isomorphism.  We will permit multi-edges; that is, we will allow two or more edges to have the same tail and head.   If $u,v\in V(\mathcal Q)$, we will denote the set of edges with tail $u$ and head $v$ by $E_{u,v}(\mathcal Q)$.  Clearly, we have$$E(\mathcal Q)\;=\;\bigcup_{(u,v)\in V(\mathcal Q)\times V(\mathcal Q)}E_{u,v}(\mathcal Q)$$as a disjoint union.  The subset$$L(\mathcal Q)\;:=\;\bigcup_{u\in V(\mathcal Q)}E_{u,u}(\mathcal Q)$$ is the \emph{loop set} of $\mathcal Q$.  Finally, we shall denote by $\overline{\mathcal{Q}}$ the associated \emph{doubled} or \emph{Nakajima quiver}, which is fashioned from $\mathcal Q$ in the following way.  First, we define a set $D_{v,u}(\mathcal Q)$.  To begin, the set is empty.  Then, for each $e\in E_{u,v}(\mathcal Q)$, we create an element $-e$ of $D_{v,u}(\mathcal Q)$.  This element $-e$ will be given a graph-theoretic interpretation as an arrow from the node $v$ to the node $u$.  Next, we form the disjoint union $\overline{E_{v,u}}(\mathcal Q)=E_{v,u}(\mathcal Q)\cup D_{v,u}(\mathcal Q)$.  We subsequently define a graph $\overline{\mathcal Q}$ by setting $V(\overline{\mathcal Q})=V(\mathcal Q)$ and $E_{u,v}(\overline{\mathcal Q})=\overline{E_{u,v}}(\mathcal Q)$. A key feature of a Nakajima quiver is that we \emph{remember} which edges came from the original quiver, and so for each $(u,v)$ we have the distinguished subset $E_{u,v}(\mathcal Q)\subset E_{u,v}(\overline{\mathcal Q})$.  In other words, we remember which edges are ``$e$'' arrows and which are ``$-e$'' arrows.  (It is often the case that the original set $E_{u,v}(\mathcal Q)$ is empty while $E_{u,v}(\overline{\mathcal Q})$ is nonempty, e.g. when there is an arrow in $\mathcal Q$ pointing from $v$ to $u$ but none from $u$ to $v$.)  Typically the extra edges $-e$, called \emph{doubled edges}, are drawn \emph{dashed} to emphasize the original quiver. An example appears in Figure \ref{fig1}.  In particular, the set of loops of the Nakajima quiver, $L(\overline{\mathcal Q})$, has a distinguished subset $L(\mathcal Q)$ consisting of precisely the loops of the original quiver.

\begin{figure}[ht] 
 \begin{tikzpicture}
  \node[draw, circle] (Lsink) at (0,0) {$u_0$};
  \node[draw, circle] (Rsink) at (5,0) {$u_0$};
  
   \foreach \a in {45}
   \foreach \b in {90 }
     \foreach \c in {135 }
       \foreach \d in {180 }
     \foreach \e in {225 }
            \foreach \f in {270 }
     \foreach \g in {315 }
 {
    \node[draw, circle, scale=0.7] (Lv) at ($(Lsink)+(\a:1.75)$) {$u_n$};
        \path[-stealth] (Lv) edge (Lsink);
        \node[draw, circle, scale=0.7] (Lv) at ($(Lsink)+(\b:1.75)$) {$u_1$};
    \path[-stealth] (Lv) edge (Lsink);
            \node[draw, circle, scale=0.7] (Lv) at ($(Lsink)+(\c:1.75)$) {$u_2$};
    \path[-stealth] (Lv) edge (Lsink);
            \node[draw, circle, scale=0.7] (Lv) at ($(Lsink)+(\d:1.75)$) {$u_3$};
    \path[-stealth] (Lv) edge (Lsink);
            \node[draw, circle, scale=0.7] (Lv) at ($(Lsink)+(\e:1.75)$) {$u_4$};
    \path[-stealth] (Lv) edge (Lsink);
            \node[draw, circle, scale=0.7] (Lv) at ($(Lsink)+(\f:1.75)$) {$u_5$};
    \path[-stealth] (Lv) edge (Lsink);
           \node[draw, circle, scale=0.7] (Lv) at ($(Lsink)+(\g:1.75)$) {$u_6$};
    \path[-stealth] (Lv) edge (Lsink);

   \node[draw, circle, scale=0.7] (Rv) at ($(Rsink)+(\a:1.75)$) {$u_n$};
     \path[-stealth] (Rv) edge (Rsink);
        \path[-stealth, dashed] (Rsink) edge[bend left=20] (Rv) ;

        \node[draw, circle, scale=0.7] (Rv) at ($(Rsink)+(\b:1.75)$) {$u_1$};
     \path[-stealth] (Rv) edge (Rsink);
        \path[-stealth, dashed] (Rsink) edge[bend left=20] (Rv) ;

            \node[draw, circle, scale=0.7] (Rv) at ($(Rsink)+(\c:1.75)$) {$u_2$};
  \path[-stealth] (Rv) edge (Rsink);
        \path[-stealth, dashed] (Rsink) edge[bend left=20] (Rv) ;

            \node[draw, circle, scale=0.7] (Rv) at ($(Rsink)+(\d:1.75)$) {$u_3$};
  \path[-stealth] (Rv) edge (Rsink);
        \path[-stealth, dashed] (Rsink) edge[bend left=20] (Rv) ;

            \node[draw, circle, scale=0.7] (Rv) at ($(Rsink)+(\e:1.75)$) {$u_4$};
  \path[-stealth] (Rv) edge (Rsink);
        \path[-stealth, dashed] (Rsink) edge[bend left=20] (Rv) ;

            \node[draw, circle, scale=0.7] (Rv) at ($(Rsink)+(\f:1.75)$) {$u_5$};
  \path[-stealth] (Rv) edge (Rsink);
        \path[-stealth, dashed] (Rsink) edge[bend left=20] (Rv) ;

           \node[draw, circle, scale=0.7] (Rv) at ($(Rsink)+(\g:1.75)$) {$u_6$};
    \path[-stealth] (Rv) edge (Rsink);
        \path[-stealth, dashed] (Rsink) edge[bend left=20] (Rv) ;

  };
    \node (Ld) at ($(Lsink)+(0:1)$) {$\cdots$};
    \node (Rd) at ($(Rsink)+(0:1)$) {$\cdots$};

\end{tikzpicture}
\caption{Left, a quiver $\mathcal Q$ with nodes $u_i$; right, the doubled quiver $\overline{\mathcal{Q}}$ obtained from $\mathcal Q$.}
\label{fig1}
\end{figure}
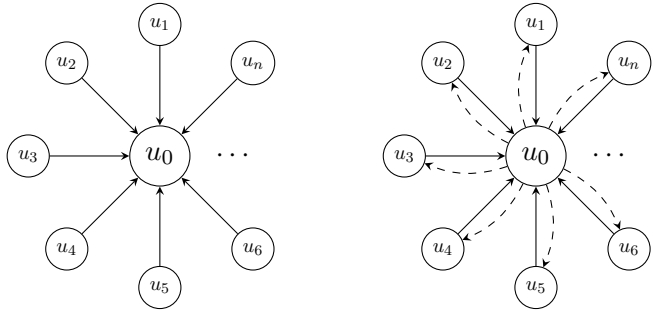

For our purposes, we need not only a quiver but also a set of labels for the quiver.  A labelling consists of a tuple $(r_u)_{u\in V(\mathcal Q)}$ where each $r_u$ is a positive integer. A \emph{representation} of $\mathcal Q$ is the choice of a linear map $x_{u,v}^e\in\mbox{Hom}(\CC^{r_u},\CC^{r_v})$ for each $u,v\in V(\mathcal Q)$ and each $e\in E_{u,v}(\mathcal Q)$. If the set $E_{u,v}(\mathcal Q)$ is empty, then we take $x=0\in\mbox{Hom}(\CC^{r_u},\CC^{r_v})$.   We put$$\mbox{Rep}_{u,v}(\mathcal Q)\;:=\;\mbox{Hom}(\CC^{r_u},\CC^{r_v})^{\oplus|E_{u,v}(\mathcal Q)|}.$$    The direct sum of these vector spaces for all pairs $(u,v)$ is denoted $\mbox{Rep}(Q)$, which we shall refer to as the \emph{space of representations} of $\mathcal Q$.   For $\overline{\mathcal Q}$, however, we do not construct its space of representations in this way.  Consider an edge $e\in E_{u,v}(\mathcal Q)\neq\varnothing$ and its corresponding doubled edge $-e\in E_{v,u}(\overline{\mathcal Q})$.  For $e$, we choose a representation $x^e_{u,v}\in\mbox{Rep}_{u,v}(\mathcal Q)$ as above.  On the other hand, for $-e$, we assign to it an element $y^{-e}_{v,u}$ of the cotangent space$$T^*_{x^e_{u,v}}\mbox{Rep}_{u,v}(\mathcal Q)=T^*_x\mbox{Hom}(\CC^{r_u},\CC^{r_v})\cong\mbox{Hom}(\CC^{r_u},\CC^{r_v})^*\cong\mbox{Hom}(\CC^{r_v},\CC^{r_u}).$$The right-most duality, given simply by the trace pairing, makes it clear that the ``direction'' of $y^{-e}_{v,u}$ as a map is consistent with the direction of the actual arrow given by $-e$.  If, on the other hand, we have $E_{u,v}(\mathcal Q)=\varnothing$, then $x=0$ and $y\in T^*_0\langle0\rangle$ and so $y=0$ too.

In general, a representation of $\overline{\mathcal Q}$ is a point$$(x,y):=(x^e_{u,v},y^{-e}_{v,u})_{(u,v)\in V(\mathcal Q)\times V(\mathcal Q),\,e\in E_{u,v}(\mathcal Q)}\in\mbox{Rep}(\overline{\mathcal Q})=T^*\mbox{Rep}(\mathcal Q).$$The main idea here is that the $y$ data (i.e. the representations of the doubled edges) are not independent of the $x$ data (i.e. the choices made for the original edges).  In some instances, we will write $y^{-e}_{v,u}(x)$ to emphasize the dependence. 

For our purposes we will declare a single node $\star\in V(\mathcal Q)$ to be the \emph{central node}.   This node will play a special role with regards to the construction of the so-called \emph{(Nakajima) quiver variety}, a (quasi)projective variety associated to the quiver and its labelling.  We will also impose the following rules: only the central node may have loops. In other words, $L(\mathcal Q)=L_{\star,\star}(\mathcal Q)$ (and likewise for $\overline{\mathcal Q}$).  Furthermore, we demand that the set $E_{\star,v}(\mathcal Q)$ is empty for every $v\neq\star$ in $V(\mathcal Q)$.  In other words, if $e$ is an edge in $\mathcal Q$ that is not a loop, then $\star$ cannot be the tail vertex of that edge.  We will typically use $a^\ell$ to denote a representation of a loop $\ell\in L(\mathcal Q)$ based at the central node and $b^{-\ell}$ for a representation of the corresponding doubled loop.  Furthermore, we impose the condition that $a^\ell$ is always a \emph{trace-free} element of $\mbox{Hom}(\CC^{r_\star},\CC^{r_\star})$, and so $a^\ell$ can be identified with an element of $\mathfrak{sl}(r_\star,\mathbb C)$ (and then so can $b^\ell$ by the duality between $\mathfrak{sl}(r_\star,\mathbb C)$ and its cotangent space at $a^\ell$).

\subsection{Calculus of quiver moment maps}

Consider now the decomposition of the vertex set $V(\mathcal Q)$ into $\Delta\cup\{s\}$.  We construct the group$$G=\left(\prod_{u\in\Delta}\mbox{U}(r_u)\times\mbox{SU}(r)\right)/\pm1,$$and denote by $\mathfrak g$ its Lie algebra. This group acts by change of basis in the expected way on $\mbox{Rep}(\mathcal Q)$: $\mbox{U}(r_u)$ and $\mbox{U}(r_v)$ act by multiplication on the right and the left, respectively, of $x^e_{u,v}$.   The component $\mbox{SU}(r)$ acts by multiplication on one side of representations of arrows entering or leaving the central node, and specifically by conjugation on any element of $L_{s,s}(\mathcal Q)$.  We regard $G$ as acting through its complexified adjoint action on pairs $(x^e_{u,v},y^{-e}_{v,u})$ in $\mbox{Rep}(\overline{\mathcal Q}):=T^*\mbox{Rep}(\mathcal Q)$.

The action by $G$ (respectively, by $G^{\mathbb C}$) is Hamiltonian with respect to the standard symplectic form on $\mbox{Rep}(\mathcal Q)$ (respectively, $\mbox{Rep}(\overline{\mathcal Q})$).  In particular, there are two moment maps that one can associate to the complex action.  These are the real moment map,
\begin{eqnarray}\mu:\mbox{Rep}(\overline{\mathcal{Q}})\longrightarrow\mathfrak{g}^*,\label{real}\end{eqnarray} and the complex moment map,
\begin{eqnarray}\nu:\mbox{Rep}(\overline{\mathcal{Q}})\longrightarrow(\mathfrak{g}^*)^\CC.\label{complex}\end{eqnarray}

Each node $u$ of $\mathcal Q$ generates a component of the codomain of each moment map, which will be the dual of an appropriate Lie algebra (either $\mathfrak{u}(r_u)^*\cong\mathbb R^{r_u^2}$ or $\mathfrak{su}(r)^*\cong\mathbb R^{r^2-1}$ or their complexifications).  We can denote this component of $\mu$ (respectively, of $\nu$) by $\mu_u$ (respectively, by $\nu_u$).  

If $u\neq\star$, then we have$$\mu_u(x,y)\;=\;\displaystyle\left(\sum_{v\in V(\mathcal Q)}\sum_{e\in E_{v,u}(\mathcal Q)}x^e_{v,u}(x^e_{v,u})^*-(y^{-e}_{u,v})^*y^{-e}_{u,v}\right)-\left(\sum_{v\in V(\mathcal Q)}\sum_{e\in E_{u,v}(\mathcal Q)}(x^e_{u,v})^*x^e_{u,v}-y^{-e}_{v,u}(y^{-e}_{v,u})^*\right),$$where $^*$ denotes the conjugate transpose.  Note that this map does not actually take values in the Lie algebra, but rather in the image of the Lie algebra under multiplication by an imaginary factor.  For our purposes, this multiplication (which amounts of an isomorphism of complex varieties) is insignificant.  On the other hand, if $u=\star$, then we have$$\mu_\star(x,y,a,b)\;=\;\displaystyle\left(\sum_{v\in V(\mathcal Q)}\sum_{e\in E_{v,\star}(\mathcal Q)}x^e_{v,\star}(x^e_{v,\star})^*-(y^{-e}_{\star,v})^*y^{-e}_{\star,v}\right)_0+\sum_{\ell\in L(\mathcal Q)}[a^\ell,(a^\ell)^*]+[b^{-\ell},(b^{-\ell})^*],$$where the subscript $_0$ means that we have removed the trace.

We similarly have$$\nu_u(x,y)=\displaystyle\left(\sum_{v\in V(\mathcal Q)}\sum_{e\in E_{v,u}(\mathcal Q)}x^e_{v,u}y^{-e}_{u,v}\right)-\left(\sum_{v\in V(\mathcal Q)}\sum_{e\in E_{u,v}(\mathcal Q)}y^{-e}_{v,u}x^e_{u,v}\right)$$whenever $u\neq\star$, and$$\nu_\star(x,y,a,b)\;=\;\displaystyle\left(\sum_{v\in V(\mathcal Q)}\sum_{e\in E_{v,\star}(\mathcal Q)}x^e_{v,\star}y^{-e}_{\star,v}\right)_0+\sum_{\ell\in L(\mathcal Q)}[a^\ell,b^{-\ell}].$$

Let $\mathcal Z(\mathfrak g)$ denote the centre of the Lie algebra.  We can now define the following moduli spaces of representations of $\mathcal Q$ and $\overline{\mathcal Q}$, respectively:

\begin{definition}\label{DefnQV}\emph{If $\alpha\in\mathcal Z(\mathfrak g)$, then $$\mathcal P_{\mathcal Q}(\alpha)=\mbox{Rep}(\mathcal Q)\git_\alpha G:=\mu^{-1}(\alpha)/G$$and$$\mathcal X_{\mathcal Q}(\alpha)=\mbox{Rep}(\overline{\mathcal Q})\hkq_\alpha G:=(\mu^{-1}(\alpha)\cap\nu^{-1}(0))/G$$ are the \textbf{(ordinary) quiver variety} and the \textbf{Nakajima quiver variety}, respectively, associated to the labelled quiver $\mathcal Q$ at level $\alpha$.}\end{definition}

In this definition, the former variety is a symplectic (or Marsden-Weinstein \cite{MR402819}) quotient while the latter is a hyperk\"ahler (or Hitchin-Karlhede-Lindst\"om-Ro\v cek \cite{MR877637}) quotient. Here, the element $\alpha$ is regarded as a choice of symplectic (or K\"ahler) modulus.  Algebro-geometrically, the former is projective while the latter is quasiprojective (in fact, affine).  Both quotients can be realized as geometric invariant theory (GIT) quotients by $G^{\mathbb C}$, as per the Kempf-Ness Theorem \cite{MR555701}. In the definition of $\mathcal P_{\mathcal Q}(\alpha)$, all of the $y$ and $b$ inputs in the moment maps $\mu$ and $\nu$ are zet to $0$.  We denote elements of $\mathcal P_{\mathcal Q}(\alpha)$ by $[x,a]$ and elements of $\mathcal X_{\mathcal Q}(\alpha)$ by $[x,y,a,b]$.

By theorems of King \cite{king} and Nakajima \cite{HN}, these quotients are smooth whenever $\alpha$ is sufficiently generic.  It is also true that$$\dim_{\mathbb C}\mathcal P_{\mathcal Q}(\alpha)=\dim_{\mathbb C}\mbox{Rep}(\mathcal Q)-2\,\mbox{rank}_{\mathbb C}\,G$$and$$\dim_{\mathbb C}\mathcal X_{\mathcal Q}(\alpha)=2\dim_{\mathbb C}\mathcal P_{\mathcal Q}(\alpha).$$The dimension of the Nakajima quiver variety follows from the fact that the containment$$T^*\mathcal P_{\mathcal Q}(\alpha)\;\subset\;\mathcal X_{\mathcal Q}(\alpha)$$as is open dense  (and in some instances the complement is in fact empty).   The variety $P_{\mathcal Q}(\alpha)$ is the zero section of the bundle, which is where all of the $y$ maps are $0$.

Regarding the hyperk\"ahler geometry of $\mathcal X_{\mathcal Q}(\alpha)$, note that we are taking the quotient of three level sets (of $\mu$ and of the real and imaginary parts of $\nu$) inside a trivial hyperk\"ahler space.   If $i$ is the square root of $-1$ as per usual acting by multiplication on $T^*\mbox{Rep}(\mathcal Q)$, then $T^*\mbox{Rep}(\mathcal Q)$ is a quaternionic affine space with standard Hermitian norm $h$ quaternions $I,J,K$ given by
 \begin{eqnarray}
 I: (x,y,a,b) &\mapsto& (ix,iy,ia,ib),\label{i}\\
J: (x,y,a,b) &\mapsto& (-y^*,x^*, -b^*, a^*),\label{j}\\
K: (x,y,a,b) &\mapsto&  (-iy^*, ix^* ,  -ib^*, ia^*)\label{k}.
  \end{eqnarray}
  There are respective symplectic forms given by
  \begin{eqnarray}
  \omega_I(\cdot, \cdot)= h(I\cdot,\cdot),~~{\rm \hspace{.5 cm}}~~ \omega_J(\cdot, \cdot)= h(J\cdot,\cdot);~~~~{\rm \hspace{.5 cm}} \omega_K(\cdot, \cdot)= h(K\cdot,\cdot),
  \end{eqnarray}
  
The quotient $\mathcal X_{\mathcal Q}(\alpha)$ inherits $I,J,K$ and $h$, and the respective symplectic forms --- we repeat these names without confusion.  The quotient metric $h$ is now a nontrivial one, the Nakajima metric.  Its asymptotics are part of separate work (with H. Wei\ss); here, all we require is its existence.

\section{Generalized hyperpolygons}\label{gen1}

The moment maps in Section \ref{review} can be used to define (Nakajima) quiver varieties for any quiver with a \emph{central node} as defined in that section.  We now restrict our attention to particular shapes of quivers, leading to what we call moduli spaces of {\it generalized (hyper)polygons}.  Rather than doing this in one fell swoop, we will do this in stages, first by constructing partial flag varieties and then by interlacing their quivers to produce star-shaped and comet-shaped quivers.

\subsection{Flag varieties and comets}

A natural starting point for the whole subject of quiver varieties is the partial flag variety (over $\mathbb C$, for us).  We can think of a partial flag variety as an operation that associates to a string of positive integers $r_1<r_2<\dots<r_m$ a complex projective variety $\mathcal F_{r_1,\dots,r_m}$ that parametrizes, up to isomorphism, nested subspaces $V_1\subseteq\cdots\subseteq V_m$ whose dimensions are determined respectively by the string and where $V_m$ is fixed (i.e. its automorphisms are not permitted to act).  The string $1,2,\dots,r$ leads to the \emph{complete flag variety} of rank $r$.  On the other hand, the short string $1,r$ is the \emph{minimal flag variety} of rank $r$.  Computing the dimension of the flag variety is a standard exercise, from which we obtain$$f_{r_1,\dots,r_m}:=\dim_{\mathbb C}\mathcal F_{r_1,\dots,r_m}\;=\;\sum_{i=1}^{m-1}r_i(r_{i+1}-r_i).$$In the case of the complete flag variety, this dimension becomes $$f_{1,\dots,r}=\frac{r(r-1)}{2}.$$  In the case of the minimal flag variety, which only has one subspace, this dimension becomes $f_{1,r}=r-1$.  For economy, we will denote the string as a vector $\underline r=(r_1,\dots,r_m)$ and thus refer to $\mathcal F_{\underline r}$ and $f_{\underline r}$.  We will also put $[r]:=(1,2,\dots,r)$.

The flag variety $\mathcal F_{\underline r}$ is the (ordinary) quiver variety for an $A$-type quiver $\mathcal Q$ labelled by the string $\underline r$, with equioriented arrows pointing towards the largest number in the string, as in Figure \ref{atype} (a).   The node with the largest label, $r_m$, is declared to be the central node $\star$.  We may also consider the Nakajima quiver variety for $\overline{\mathcal Q}$ (see Figure \ref{atype} (b)), which in this case is identified precisely with $T^*\mathcal F_{\underline r}$ as a hypertoric variety.

\begin{figure}[!h]
\begin{center}
\includegraphics[width=0.55\linewidth]{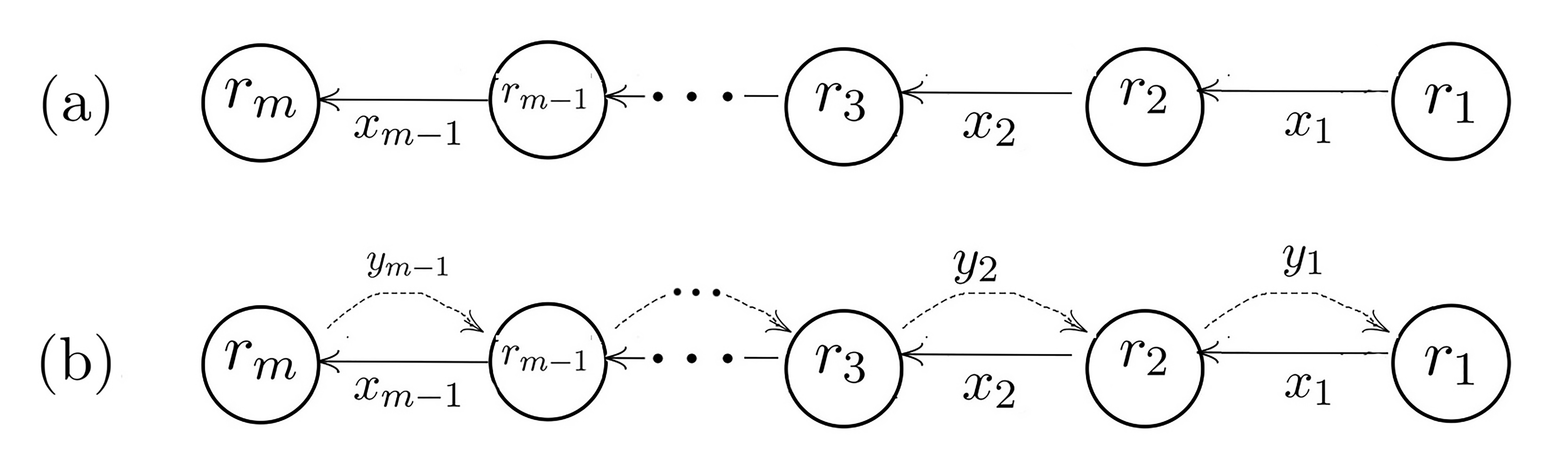}
\caption{(a) An $A$-type quiver $\mathcal Q$; (b) the corresponding doubled quiver $\overline{\mathcal Q}$.}\label{atype}
\end{center}
\end{figure}

To see how the variety $\mathcal F_{\underline r}$ arises as a quiver variety as in the sense of Definition \ref{DefnQV}, we fix the data of a matrix $\alpha\in\mathfrak{u}(r_1)$.   We construct a group $G$ associated to the quiver as in the previous section \emph{except} that we do not include the group determined by the $m$-th node, and so $G=U(1)^{m-1}$.  We denote by $x_{k-1}\in\mbox{Hom}(V_{k-1},V_k)\cong\mathbb C^{(k-1)k}$ a choice of representation for the arrow that points towards the $k$-th node; likewise, the arrow leaving that node is represented by some $x_k\in\mbox{Hom}(V_{k},V_{k+1})\cong\mathbb C^{k(k+1)}$, as in Figure \ref{atype} above. For each $k$ within $1<k<m$, the corresponding moment map for the $\mbox U(r_k)$-component of the action is$$\mu_i(x_1,\dots,x_{m-1})=x_{k-1}x_{k-1}^*-x_{k}^*x_{k},$$by the previous section. When $k=1$, this moment map is simply$$\mu_1(x_1,\dots,x_{m-1})=x_1^*x_1,$$where we omit a minus sign without loss of generality.  Then,$$\mathcal F_{\underline r}\;:=\;\mathcal P_{\mathcal Q}(\alpha,0,\dots,0)\;=\;\left(\mu_1^{-1}(\alpha)\cap\bigcap_{k=2}^{m-1}\mu_k(0)\right)/\prod_{k=1}^{m-1}\mbox U(r_k).$$Note that we take the choice of $\alpha$ to be understood when we write simply $\mathcal F_{\underline r}$.

Because of how we constructed $G$, there is a residual action of $\mbox{U}(r_m)$ on $\mathcal F_{\underline r}$ with moment map$$\mu_m(x_1,\dots,x_{m-1})=x_{m-1}x_{m-1}^*.$$The map $\mu_m:\mathcal F_{\underline r}\to\mathfrak{u}(r_m)$ embeds the flag variety as a coadjoint orbit for $\mbox U(r_m)$.  Likewise, the residual action of $\mbox U(r_m)^\mathbb{C}=\mbox{GL}(r_m,\mathbb C)$ on $T^*\mathcal F_{\underline r}$ has moment map$$\nu_m(x_1,\dots,x_{m-1};y_1,\dots,y_{m-1})=x_{m-1}y_{m-1}.$$The image of this map is a subvariety of the nilpotent cone in $\mathfrak{gl}(r_m,\mathbb C)$ and, for the complete flag, this map is the Springer resolution for the Lie algebra $\mathfrak{sl}(r,\mathbb C)$.

If we were to quotient of either $F_{\underline r}$ or $T^*\mathcal F_{\underline r}$ by the (complexified) residual action then we would obtain a zero-dimensional variety.  However, if take a number of $A$-type quivers that share a common maximal label $r$ and identify the nodes with those maximal labels, then we produce a new shape of quiver, the star-shaped quiver, an example of which appears in Figure \ref{atype2}.

\begin{figure}[h!]
\begin{center}
\includegraphics[width=0.8\linewidth]{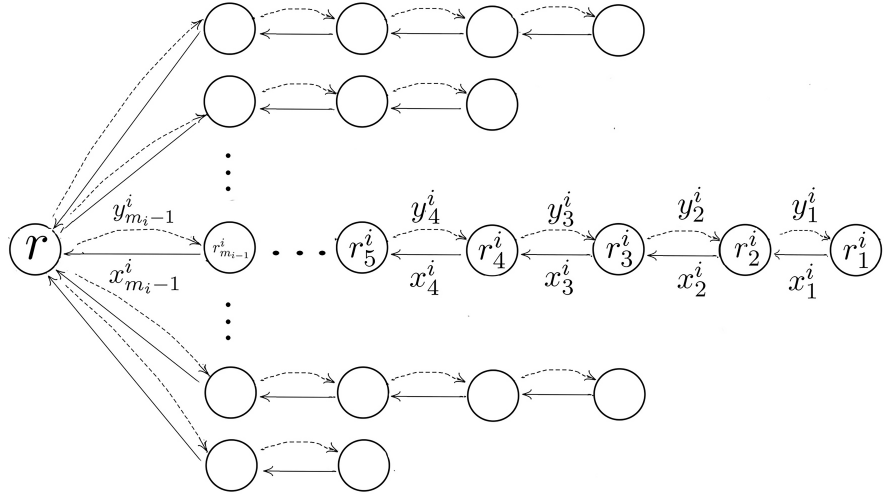}
\caption{Several $A$-type quivers sharing a common maximal label $r$ have been joined together to form a {\it  star-shaped quiver} $\overline{\mathcal Q}$.}\label{atype2}
\end{center}
\end{figure}
\pagebreak
If there are sufficiently-many $A$-type quivers glued in this way and we quotient by the full action (i.e. by the group $G$ that includes the automorphisms of the central node), the dimension of the resulting quiver variety will be nonzero.  We refer to the $A$-type quivers as the \emph{arms} of the star-shaped quiver.   
In Figure \ref{atype2} above, the $i$-th arm of the star shaped quiver has been given labels. To the star-shaped quiver, we also add $g$ loops to the central node of $\mathcal Q$ (hence, $2g$ loops to the central node of $\overline{\mathcal Q}$).  This produces the so-called \emph{comet quiver}, an example of which is shown in Figure \ref{comet} below.

  \begin{figure}[h!]
\begin{center}
\includegraphics[width=0.8\linewidth]{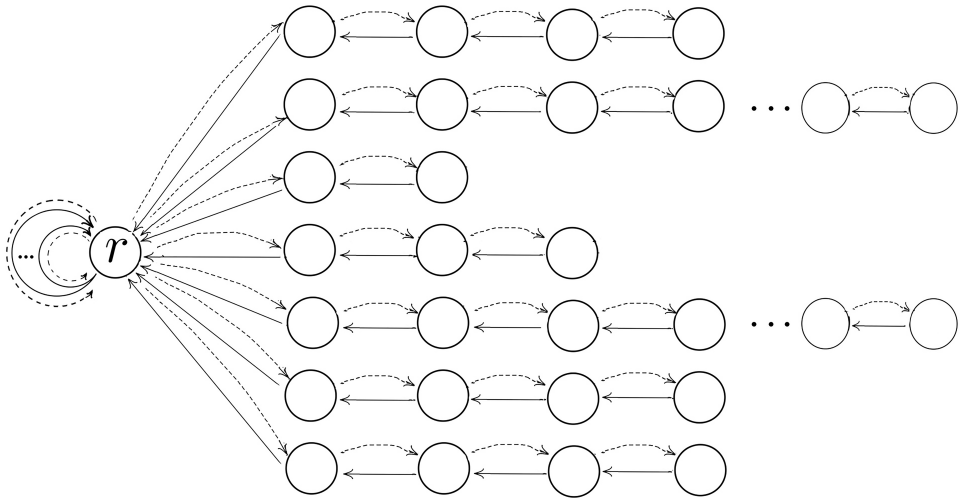}
\caption{A comet-shaped quiver.} \label{comet}
\end{center}
\end{figure}

    As per the conventions of the previous section, a representation of the $j$-th loop of $\mathcal Q$ is a choice of matrix $a_j\in\mathfrak{sl}(r,\mathbb C)$.  Suppose that there are $n$ arms.  Consider the $i$-th arm and let its string be given by the vector $\underline{r^i}=(r^i_1,r^i_2,\dots,r^i_{m_i})$, where $r^i_{m_i}=r$, as in Figure \ref{atype2}.  When each and every arm is the complete flag, we refer to the quiver as the \emph{complete comet}, as in Figure \ref{comet1} (a).  When each and every arm is the minimal flag, we refer to the quiver as the \emph{minimal comet},  as in Figure \ref{comet1} (b).

\begin{figure}[!h]
\begin{center}
\includegraphics[width=0.85\linewidth]{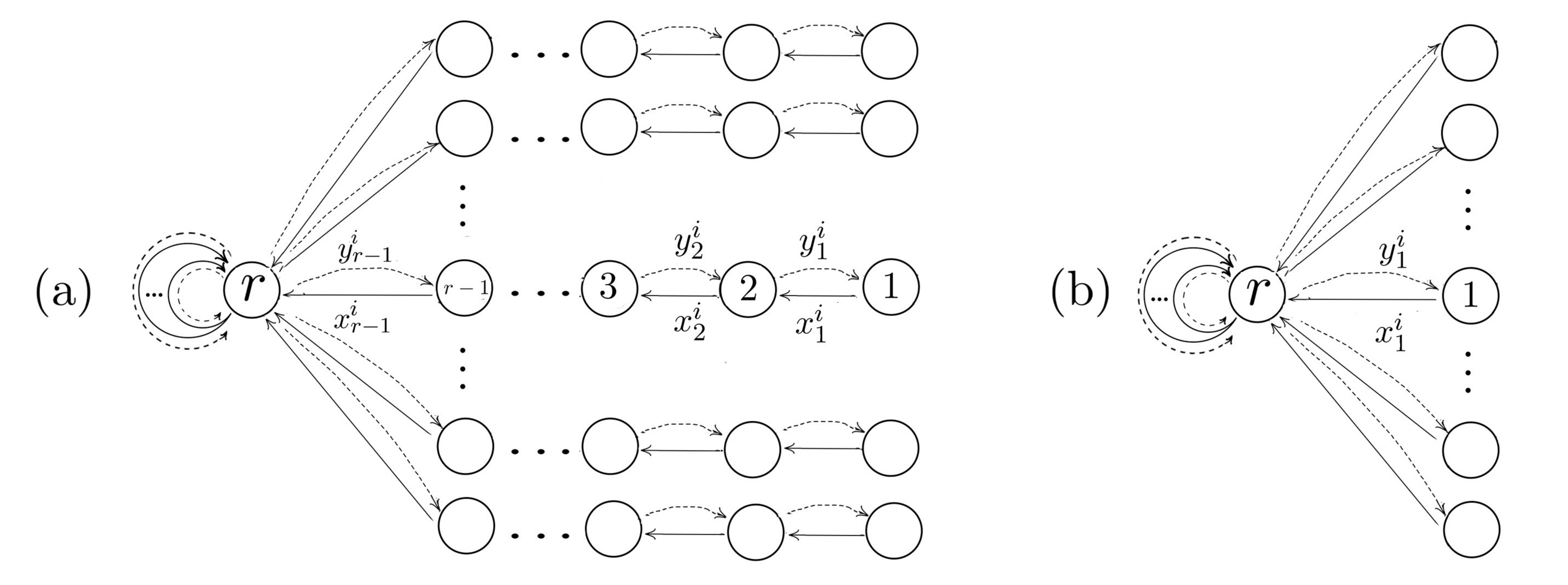}
\caption{(a) A complete comet; (b) a minimal comet.} \label{comet1}
\end{center}
\end{figure}

   Let the ordinary quiver variety associated to the comet quiver be denoted $\mathcal P^g_{\underline{r^1},\dots,\underline{r^n}}(\underline\alpha)$.  This quiver variety arises, as a symplectic quotient, from the choice $\underline\alpha$ of $n$ matrices $\alpha_i\in\mathfrak{u}(r^i_1)$.  Considering the residual action on the central node by $\mbox{SU}(r)$, the associated moment map is now$$\mu_\star:\left(\mathcal F_{\underline{r^1}}(\alpha_1)\times\cdots\times\mathcal F_{\underline{r^n}}(\alpha_n)\right)\times\mathfrak{sl}(r,\mathbb C)^g\to\mathfrak{su}(r)$$given by$$\mu_\star(x,a)\;=\;\sum_{i=1}^n(x^i_{m_i-1}(x^i_{m_i-1})^*)_0+\sum_{j=1}^g[a_j,a_j^*],$$where the subscript $0$ is again an instruction to remove the trace (and we have identified Lie algebras with their duals).  Then, we have$$\mathcal P^g_{\underline{r^1},\dots,\underline{r^n}}(\underline\alpha)\;:=\;\mu_\star^{-1}(0)/\mbox{SU}(r).$$

\begin{proposition}\label{PropDimP}  The complex dimension of $\mathcal P^g_{\underline{r^1},\dots,\underline{r^n}}(\underline\alpha)$ is $\sum_{i=1}^nf_{\underline{r^i}}+(g-1)(r^2-1)$.\end{proposition}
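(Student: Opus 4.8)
The plan is to compute the dimension by realizing $\mathcal P^g_{\underline{r^1},\dots,\underline{r^n}}(\underline\alpha)$ as an iterated symplectic reduction and counting dimensions at each stage, exactly as the quiver variety is assembled in this section. Concretely, I would first reduce along the arms to produce the flag factors, and then perform the residual $\mbox{SU}(r)$ reduction at the central node, where the loop data lives.

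First I would recall from the construction above that, for generic $\underline\alpha$, reducing the $i$-th $A$-type subquiver by $\prod_{k=1}^{m_i-1}\mbox{U}(r^i_k)$ yields the partial flag variety $\mathcal F_{\underline{r^i}}(\alpha_i)$, of complex dimension $f_{\underline{r^i}}=\sum_{k=1}^{m_i-1}r^i_k(r^i_{k+1}-r^i_k)$. Since each arm group acts only on the outer and interior nodes of its arm, and thus commutes with the $\mbox{SU}(r)$-action on the central node and with the other arm groups, reduction in stages applies: the comet variety is obtained from the partially reduced space
\[ M:=\Big(\mathcal F_{\underline{r^1}}(\alpha_1)\times\cdots\times\mathcal F_{\underline{r^n}}(\alpha_n)\Big)\times\mathfrak{sl}(r,\mathbb C)^g \]
by the single remaining reduction $\mathcal P^g_{\underline{r^1},\dots,\underline{r^n}}(\underline\alpha)=\mu_\star^{-1}(0)/\mbox{SU}(r)$, with $\mu_\star$ the central moment map displayed just above the statement.

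Next I would total the dimension of $M$. Each flag factor contributes $f_{\underline{r^i}}$, and each of the $g$ loop slots contributes a copy of $\mathfrak{sl}(r,\mathbb C)$ of complex dimension $r^2-1$ (recall that $b=0$ on the ordinary quiver variety, so only the $a_j\in\mathfrak{sl}(r,\mathbb C)$ survive). Hence $\dim_{\mathbb C}M=\sum_{i=1}^n f_{\underline{r^i}}+g(r^2-1)$. Taking the symplectic quotient by $\mbox{SU}(r)$ at the central value $0$ cuts the real dimension by $2\dim_{\mathbb R}\mbox{SU}(r)$ --- that is, the complex dimension by $\dim_{\mathbb R}\mbox{SU}(r)=r^2-1$ --- provided $0$ is a regular value and $\mbox{SU}(r)$ acts locally freely on $\mu_\star^{-1}(0)$. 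Granting this, $\dim_{\mathbb C}\mathcal P^g_{\underline{r^1},\dots,\underline{r^n}}(\underline\alpha)=\sum_{i=1}^n f_{\underline{r^i}}+g(r^2-1)-(r^2-1)=\sum_{i=1}^n f_{\underline{r^i}}+(g-1)(r^2-1)$, as claimed. As a cross-check, the unstaged count $\dim_{\mathbb C}\mbox{Rep}(\mathcal Q)-\dim_{\mathbb C}G^{\mathbb C}$ gives the same value, using the identity $\sum_{k=1}^{m_i-1}r^i_k r^i_{k+1}-\sum_{k=1}^{m_i-1}(r^i_k)^2=f_{\underline{r^i}}$ to absorb the arm-group dimensions into the flag-variety dimensions.

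The step I expect to be the main obstacle is the regularity and local-freeness needed to guarantee that the quotient has the expected dimension, rather than dropping lower or becoming singular: I would establish it by invoking the smoothness theorems of King and Nakajima cited above, which assert precisely that $\mu_\star^{-1}(0)/\mbox{SU}(r)$ is smooth of the expected dimension for generic $\underline\alpha$. The genericity enters through the choice of the $\alpha_i$ --- equivalently, through which coadjoint orbits the flag factors sit on --- and is exactly what makes the naive subtraction valid.
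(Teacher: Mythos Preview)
Your proposal is correct and follows essentially the same approach as the paper: both realize $\mathcal P^g_{\underline{r^1},\dots,\underline{r^n}}(\underline\alpha)$ as the quotient of $\big(\prod_i\mathcal F_{\underline{r^i}}(\alpha_i)\big)\times\mathfrak{sl}(r,\mathbb C)^g$ by the central $\mbox{SU}(r)$-action and subtract $r^2-1$. The only cosmetic difference is that the paper invokes the Kempf--Ness theorem to pass to the geometric quotient by $\mbox{SL}(r,\mathbb C)$ (so the subtraction of $r^2-1$ is automatic), whereas you stay on the symplectic side and appeal to King--Nakajima for the regularity and local-freeness needed to justify the same subtraction.
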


\begin{proof}  The formula follows immediately from the fact that the quotient variety $\mathcal P^g_{\underline{r^1},\dots,\underline{r^n}}(\underline\alpha)$ is equivalent, by the Kempf-Ness Theorem, to the geometric quotient of the Cartesian product$$\left(\mathcal F_{\underline{r^1}}(\alpha_1)\times\cdots\times\mathcal F_{\underline{r^n}(\alpha_n)}\right)\times\mathfrak{sl}(r,\mathbb C)^g$$by the diagonal $\mbox{SL}(r,\mathbb C)$ action.\end{proof}

Note that the dimension is independent of the choices of the $\alpha_i$.

\begin{corollary} The complex dimension of $\mathcal P^g_{[r],\dots,[r]}(\underline\alpha)$ (i.e. where the quiver is the complete comet) is$$\frac{nr(r-1)}{2}+(g-1)(r^2-1).$$The complex dimension of $\mathcal P^g_{(1,r),\dots,(1,r)}(\underline\alpha)$ (i.e. where the quiver is the minimal comet) is$$n(r-1)+(g-1)(r^2-1).$$\end{corollary}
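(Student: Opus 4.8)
The plan is to derive both dimension counts as immediate specializations of Proposition \ref{PropDimP}, which already supplies the general formula $\sum_{i=1}^n f_{\underline{r^i}} + (g-1)(r^2-1)$ for the complex dimension of $\mathcal P^g_{\underline{r^1},\dots,\underline{r^n}}(\underline\alpha)$. Since the corollary only changes the flag type assigned to each arm while leaving the loop structure intact, the term $(g-1)(r^2-1)$ is untouched in both cases, and the entire task reduces to evaluating the arm sum $\sum_{i=1}^n f_{\underline{r^i}}$ for the two prescribed flag types.

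First I would treat the complete comet, where every arm carries the string $[r]=(1,2,\dots,r)$. Here I invoke the value $f_{[r]}=f_{1,\dots,r}=\frac{r(r-1)}{2}$ recorded above for the complete flag variety. Because all $n$ arms are identical, the homogeneity lets me replace the sum over $i$ by multiplication by $n$, so that $\sum_{i=1}^n f_{[r]} = \frac{nr(r-1)}{2}$; appending the loop contribution $(g-1)(r^2-1)$ gives the first stated dimension. Next I would treat the minimal comet, where every arm carries the string $(1,r)$. The relevant value is then $f_{(1,r)}=r-1$, again recorded above for the minimal flag variety with its single subspace, so the sum collapses to $n(r-1)$, and appending $(g-1)(r^2-1)$ produces the second formula.

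There is no substantive obstacle here: the corollary is a direct substitution into Proposition \ref{PropDimP}, and its proof involves no new geometry. The only points requiring care are to correctly recall the two flag-variety dimensions $f_{[r]}$ and $f_{(1,r)}$ from the preceding discussion, both of which are already established in the text, and to observe that the identical-arm hypothesis is exactly what permits the sum $\sum_{i=1}^n$ to be replaced by the factor $n$. Accordingly, I would keep the proof to a single line of substitution for each case rather than reproving the underlying dimension formula.
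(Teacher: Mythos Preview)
Your proposal is correct and matches the paper's approach exactly: the paper states this as an immediate corollary of Proposition \ref{PropDimP} with no proof given, and your substitution of $f_{[r]}=r(r-1)/2$ and $f_{(1,r)}=r-1$ into the general formula is precisely the intended argument.
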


For the Nakajima quiver variety of the comet, the corresponding moment maps are$$\mu_\star:\left(T^*\mathcal F_{\underline{r^1}}(\alpha_1)\times\cdots\times T^*\mathcal F_{\underline{r^n}}(\alpha_n)\right)\times T^*\mathfrak{sl}(r,\mathbb C)^g\to\mathfrak{su}(r)$$$$\nu_\star:\left(T^*\mathcal F_{\underline{r^1}}(\alpha_1)\times\cdots\times T^*\mathcal F_{\underline{r^n}}(\alpha_n)\right)\times T^*\mathfrak{sl}(r,\mathbb C)^g\to\mathfrak{sl}(r,\mathbb C)$$given respectively by$$\mu_\star(x,y,a,b)\;=\;\sum_{i=1}^n(x^i_{m_i-1}(x^i_{m_i-1})^*-(y^i_{m_i-1})^*y^i_{m_i-1})_0+\sum_{j=1}^g[a_j,a_j^*]+[b_j,b_j^*],$$$$\nu_\star(x,y,a,b)\;=\;\sum_{i=1}^n(x^i_{m_i-1}y^i_{m_i-1})_0+\sum_{j=1}^g[a_j,b_j].$$Then, we have$$\mathcal X^g_{\underline{r^1},\dots,\underline{r^n}}(\underline\alpha)\;:=\;(\mu_\star^{-1}(0)\cap\nu_\star^{-1}(0))/\mbox{SU}(r).$$

We are particularly interested in the quotient $\mathcal X^g_{\underline{r^1},\dots,\underline{r^n}}(\underline\alpha)$, which has dimension twice that in Proposition \eqref{PropDimP}.  Note that the containment of $T^*\mathcal P^g_{\underline{r^1},\dots,\underline{r^n}}(\underline\alpha)$ is now \emph{strict} and so the projection $\mathcal X^g_{\underline{r^1},\dots,\underline{r^n}}(\underline\alpha)\to\mathcal P^g_{\underline{r^1},\dots,\underline{r^n}}(\underline\alpha)$ is only rationally defined.

The relationship of $\mathcal P^g_{\underline{r^1},\dots,\underline{r^n}}(\underline\alpha)$ and $\mathcal X^g_{\underline{r^1},\dots,\underline{r^n}}(\underline\alpha)$ to polygons and hyperpolygons, respectively, is the subject of Section \eqref{SectGeomHyp}.

\subsection{Geometry of generalized (hyper)polygons}\label{SectGeomHyp}

When $r^i_1=1$ for each arm of the comet, each matrix $\alpha_i$ is simply a real number and we will always take $\alpha_i\in\mathbb R_{>0}$.  In this situation, we refer to isomorphism classes of representations of the comet --- that is, classes $[x,a]\in\mathcal P^g_{\underline{r^1},\dots,\underline{r^n}}(\underline\alpha)$ --- as \emph{generalized $n$-gons of rank $r$, genus $g$, and length $\underline\alpha$}.  These objects generalize isomorphism classes of ordinary $n$-gons (closed, with fixed side lengths and barycentre) in the Euclidean space $\mathbb R^{3}$, which are elements of $\mathcal P^0_{(1,2),\dots,(1,2)}(\underline\alpha)$.  As such, we refer to $\mathcal P^g_{\underline{r^1},\dots,\underline{r^n}}(\underline\alpha)$ as the moduli space of generalized $n$-gons of rank $r$, genus $g$, and length $\underline\alpha$.

A representation in $\mathcal P^g_{\underline{r^1},\dots,\underline{r^n}}(\underline\alpha)$ determines a closed polygon in $\mathbb R^{r^2-1}\cong\mathfrak{su}(r)$ in the following way: the sides are given by the matrices $(x_{m_i-1}^i(x_{m_i-1}^i)^*)_0$, which are determined respectively by the representation of the closest arrow along the $i$-th arm to the central node, as well as the matrices $[a_j,a_j^*]$ determined by the loops.  From the symplectic point of view, the moment map condition $\mu_\star=0$ on$$\left(\mathcal F_{\underline{r^1}(\alpha_1)}\times\cdots\times\mathcal F_{\underline{r^n}(\alpha_n)}\right)\times\mathfrak{sl}(r,\mathbb C)^g$$is equivalent to asking that these matrices, when viewed as vectors in $\mathbb R^{r^2-1}$, make a closed figure with $n+g$ sides.  If we move outward along any arm, each node (save for the terminal node with label $r^i_1=1$) produces the moment map condition$$(x^i_{k})^*x^i_{k}=x_{k-1}^i(x_{k-1}^i)^*$$in $\mathbb R^{(r^i_k)^2-1}\cong\mathfrak{su}(r^i_k)$ from the associated partial flag variety $\mathcal F_{\underline{r^i}}(\alpha_i)$.  The terminal node yields the single condition$$(x^i_1)^*(x^i_1)=|x^i_1|^2=\alpha_i.$$  In combination with the other moment map conditions, the condition $|x^i_1|^2=\alpha_i^2$ fixes the norms of the matrices $x^i_k(x^i_k)^*$ and $(x^i_k)^*x^i_k$ for all $k$.  In particular, each side $(x_{m_i-1}^i(x_{m_i-1}^i)^*)_0$ has Euclidean length given by $\frac{1}{\sqrt r}\alpha_i$ and the length of $\sum_{j=1}^g[a_j,a_j^*]$ is given by $\frac{1}{\sqrt r}\sum\alpha_i$.

Classes in $\mathcal X^g_{\underline{r^1},\dots,\underline{r^n}}(\underline\alpha)$ have a similar geometric interpretation as \emph{pairs} of polygons, one with $2n+2g$ sides in $\mathfrak{su}(r)$ and the other with $n+g$ sides in $\mathfrak{sl}(r,\mathbb C)$.  The data of a representation $[x,y,a,b]$ satisfying the moment map equations yield a closed figure in $\mathfrak{su}(r)$ with sides of the form $(x_{m_i-1}^i(x_{m_i-1}^i)^*)_0$, $-((y_{m_i-1}^i)^*x_{m_i-1}^i)_0$, $[a_j,a_j^*]$, and $[b_j,b_j^*]$, with $i$ ranging from $1$ to $n$ and $j$ ranging from $1$ to $g$.  At the same time, we have a closed figure in $\mathfrak{sl}(r,\mathbb C)$ with sides of the form $(x_{m_i-1}y_{m_i-1})_0$ and $[a_j,b_j]$.  We refer to pairs of polygons of this form as \emph{hyperpolygons}.  Specifically, the pair here is a \emph{hyper-$n$-gon of rank $r$, genus $g$, and length $\underline\alpha$} and $\mathcal X^g_{\underline{r^1},\dots,\underline{r^n}}(\underline\alpha)$ is their moduli space.   Note that, for the polygon in $\mathfrak{su}(r)$, the length of the side $(x_{m_i-1}^i(x_{m_i-1}^i)^*)_0$ needs not be $\frac{1}{\sqrt r}\alpha_i$.  Rather, it is the length of the \emph{difference} $(x_{m_i-1}^i(x_{m_i-1}^i)^*)-(y_{m_i-1}^i)^*x_{m_i-1}^i)_0$ that must be $\frac{1}{\sqrt r}\alpha_i$.  This feature makes plain the noncompactness of $\mathcal X^g_{\underline{r^1},\dots,\underline{r^n}}(\underline\alpha)$ (as opposed to $\mathcal P^g_{\underline{r^1},\dots,\underline{r^n}}(\underline\alpha)$, which is necessarily compact).

We refer to the polygon in $\mathfrak{su}(r)$ as the \emph{bundle polygon} and the one in $\mathfrak{sl}(r,\mathbb C)$ as the \emph{Higgs polygon}.  The discussion in Section \ref{SecAnalogy} will make clear the reason for this nomenclature.

\subsection{$\mbox{U}(1)$-action on hyperpolygons}

The quasiprojective variety $\mathcal X^g_{\underline{r^1},\dots,\underline{r^n}}(\underline\alpha)$ comes with an action of $\mbox{U}(1)$ defined as so: if $[x,y,a,b]\in\mathcal X^g_{\underline{r^1},\dots,\underline{r^n}}(\underline\alpha)$ is a generalized hyperpolygon, then we have$$[x,y,a,b]\stackrel{\theta}{\longrightarrow}[x,\exp(i\theta)y,a,\exp(i\theta)b].$$It is easy enough to check that this action is Hamiltonian with regards to the $\omega_I$ symplectric form.  Had we defined $\mathcal X^g_{\underline{r^1},\dots,\underline{r^n}}(\underline\alpha)$ as a GIT quotient, this action can be promoted to an algebraic action by $\mathbb C^*$, preserving the $I$ complex structure.

This action was used in Section 3 of \cite{steve} to compute, via Morse-Bott localization, the Betti numbers of the rational cohomology ring in the case of the minimal comet with no loops.  The calculations would be similar in the case of an arbitrary comet, although as our interests are more geometrical rather than topological here, we leave this aside.  Rather, we focus on the following observation.  While every Nakajima quiver variety enjoys such an action, it is interesting to note its interpretation in terms of the geometry of hyperpolygons.  Note that the expressions $((y_{m_i-1}^i)^*y_{m_i-1}^i)_0$ and $[b_j,b_j^*]$ are invariant under the action, and so the bundle polygon is invariant geometrically.  On the other hand, the expressions $(x_{m_i-1}y_{m_i-1})_0$ and $[a_j,b_j]$ are not generally invariant under the action.  In particular, the moduli space of generalized polygons $\mathcal P^g_{\underline{r^1},\dots,\underline{r^n}}(\underline\alpha)$, which sits inside $\mathcal X^g_{\underline{r^1},\dots,\underline{r^n}}(\underline\alpha)$ as the zero locus of $T^*\mathcal P^g_{\underline{r^1},\dots,\underline{r^n}}(\underline\alpha)$, is fixed under the action.

As such, in any hyperpolygon $[x,y,a,b]$ the bundle polygon is static while the Higgs polygon is rotated through $\theta$ within the ambient Lie algebra.  This feature of the $\mbox{U}(1)$-action resembles that of the action on the moduli space of Higgs bundles on a compact Riemann surface \cite{N1}, which has a Hamiltonian $\mbox{U}(1)$-action (respectively, an algebraic $\mathbb C^*$-action).   This leads to a localization for the Higgs bundle cohomology, as originally done in rank $2$ in \cite{N1} (cf. \cite{MR3884746} for a survey in arbitary rank).  On the Higgs bundle moduli space, the action leaves vector bundles invariant while rotating Higgs fields --- and in particular, the moduli space of stable bundles is invariant.   This similarity between hyperpolygons and Higgs bundles motivates the analogy of the next section.

\section{Relationship to Higgs bundles}\label{SecAnalogy}

\subsection{Analogy with Hitchin equations}

We take $r^i=1$, $r^i_m=r$ from now on, to maintain the hypotheses of the preceding section.  The construction above realizes the Nakajima quiver variety $\mathcal X^g_{\underline{r^1},\dots,\underline{r^n}}(\underline\alpha)$ as the set of solutions to a set of equations in spaces of matrices, divided by the complexified adjoint action of the group $G=\mbox{SU}(r)\times U(1)^n$. We isolote these equations here:

\begin{definition} For each $\underline\alpha\in\mathbb R^n_{>0}$, the \textbf{hyperpolygon equations} are:
 \begin{eqnarray}
& \emph{(i)} & \sum_{i=1}^n(x^i_{m_i-1}(x^i_{m_i-1})^*-(y^i_{m_i-1})^*y^i_{m_i-1})_0+\sum_{j=1}^g[a_j,a_j^*]+[b_j,b_j^*]  =  0\nonumber\\
& \emph{(ii)} & x_{k-1}^i(x_{k-1}^i)^*+y^i_k(y^i_k)^*-(x^i_{k})^*x^i_{k}-(y^i_{k-1})^*y_{k-1}^i =  0,\;k=2,\dots,m_i-2,\;i=1,\dots,n\nonumber\\
& \emph{(iii)} & |x^i_1|^2-|y^i_1|^2  =  \alpha_i,\;i=1,\dots,n\nonumber\\
& \emph{(I)} & \sum_{i=1}^n(x^i_{m_i-1}y^i_{m_i-1})_0+\sum_{j=1}^g[a_j,b_j]  =  0\nonumber\\
& \emph{(II)} & x_{k-1}^iy_{k-1}^i-y_k^ix_k^i=0,\;k=2,\dots,m_i-2,\;i=1,\dots,n\nonumber\\
& \emph{(III)} & y^i_1x^i_1  =  0,\;i=1,\dots,n\nonumber
\end{eqnarray}\label{DefnHEqns}
\end{definition}

We single out these equations for the reason that they are discrete analogues of the \emph{Hitchin equations} \cite{N1}, which are equations on a smooth Hermitian bundle $E$ over a Riemann surface $X$.  In particular, equation (i) is the analogue of the first Hitchin equation$$F(A)+\phi\wedge\phi^*=0$$in which the failure of a unitary connection $A$ on $E$ to be flat is expressed in terms of a \emph{Higgs field} $\phi:E\to E\otimes K$, where $\phi$ is linear in functions (i.e. $\phi(fs)=f\phi(s)$ for any section $s\in\Gamma(E)$ and any function $f\in\mathcal C^\infty(X)$) and $K$ is the canonical line bundle of $X$ (i.e. the bundle of holomorphic one-forms).  Here, we may identify the term $\sum_{i=1}^n(x^i_{m_i-1}(x^i_{m_i-1})^*)_0+\sum_{j=1}^g[a_j,a_j^*]$ with $F(A)$ and the term $-((y^i_{m_i-1})^*y^i_{m_i-1})_0+\sum_{j=1}^g[b_j,b_j^*]$ with $\phi\wedge\phi^*$.  In particular, the failure of the connection to be flat is paralleled in the failure of the (ordinary) polygon to close.  The way in which the Higgs field ``flattens'' the connection (we have $F(A+\phi+\phi^*)=0$) is analogous to the way in which the $y$ and $b$ data close the figure in $\mathfrak{su}(r)$.  Likewise, equation (I) is the analogue of the second Hitchin equation$$d_A''\phi=0$$that makes $\phi$ holomorphic with respect to the holomorphic structure on $E$ induced by $A$.  Equation (I) can be thought of as ``holomorphicity at infinity'' for an associated Higgs bundle, which we describe now.

\subsection{Associated meromorphic Higgs bundle and tame character varieties}

In the $g=0$ (i.e. star-shaped) case with sufficiently generic $\underline\alpha$, one can identify elements $[x,y]\in\mathcal X^0_{\underline{r^1},\dots,\underline{r^n}}(\underline\alpha)$ with meromorphic Higgs bundles of rank $r$ on an $n$-punctured complex projective line, where $\underline\beta$ is an appropriate choice of parabolic weight vector along the divisor of punctures.  This was accomplished in Section 4 of \cite{steve} in the case that each arm has the minimal flag type.   The construction of an associated Higgs bundle from \cite{steve} is actually independent of the flag type.  To see this, let $E$ stand for $\mathbb P^1\times\mathbb C^r$ with the trivial holomorphic structure and let $D=\sum_{i=1}^nz_i$ be the divisor, such that the $z_i$ are pairwise distinct and none are $\infty$.  Then, define$$\phi_{[x,y]}(z)\;=\;\sum_{i=1}^n\frac{(x^i_{m_i-1}y^i_{m_i-1})_0}{z-z_i}dz.$$The pair $(E,\phi)$ defines a Higgs structure on the trivial rank-$r$ bundle on $\mathbb P^1$ that is memorphic along $D$, associated to the generalized genus-$0$ hyperpolygon $[x,y]$.  Equation (I), which is a condition on the $z_i$-residues of $\phi_{[x,y]}$, ensures that $\phi_{[x,y]}$ is holomorphic at $\infty\in\mathbb P^1$.  Specifically for $g=0,r=2$, it was shown in \cite{Ale} that the map$$[x,y]\mapsto[\phi_{[x,y]}]$$is an embedding of moduli spaces for $\underline\alpha$ sufficiently generic, where $[\phi_{[x,y]}]$ is the isomorphism class of $\phi_{[x,y]}$ under the conjugation action of holomorphic automorphisms of $E$.  The corresponding moduli space of the Higgs fields $\phi_{[x,y]}$ is defined using slope stability with respect to a parabolic weight vector $\underline\beta$.  The weights $\underline\beta$ can be used to turn $(E,\phi_{[x,y]})$ into a \emph{strictly} parabolic Higgs bundle and are determined, albeit non-uniquely, from $\underline\alpha$.  This can be done for any $r$ but, for our purposes, we will not need to compute $\beta$ explicitly.  Note also that the residues are nilpotent of order equal to the length of the corresponding arm.  For example, if the $i$-th arm is complete, then $(x^i_{m_i-1}y^i_{m_i-1})_0^r=0$ by the moment map conditions.  For a minimal arm, we have $(x^i_{m_i-1}y^i_{m_i-1})_0^2=0$.

The above construction of a Higgs bundle from a hyperpolygon can be extended to higher genus.  The easiest way to accomplish this is to consider $(X,\mathcal C)$, where $X$ is either the standard complex or hyperbolic plane and $\mathcal C$ is a regular tiling of $X$.  We then define a matrix-valued one-form on $X$ from $[x,y,a,b]\in\mathcal X^g_{\underline{r^1},\dots,\underline{r^n}}(\underline\alpha)$ as follows.  First, if $g=0$, then $X$ is the standard plane with the fundamental cell equal to the whole plane; if $g=1$, then $X$ is the standard plane with $\mathcal C$ a rhombus tiling that is regular with regards to the standard Euclidean metric; or if $g>1$, then $X$ is the hyperbolic plane with $\mathcal C$ a tiling by $2g$-gons with a unit cell that is regular with regards to the Poincar\'e metric.   Then, we choose $n$ distinct points $z_i$ in the fundamental cell and construct$$\phi_{[x,y,a,b]}(z)\;=\;\sum_{i=1}^n\frac{(x^i_{m_i-1}y^i_{m_i-1})_0}{z-\iota_z(z_i)}dz,$$where $\iota_z:\mathbb C\to\mathbb C$ translates $z_i$ to the copy of the fundamental cell to which $z$ belongs (which can be accomplished by choosing an appropriate element of a Fuchsian group $\Gamma$).   When $X=\CC$ and the cell is all of $X$, we obtain a meromorphic Higgs field for the trivial vector bundle $E=\mathbb P^1\times\mathbb C^r$ by compactifying the plane.  (Here, $\iota_z$ is the identity.)  When $X=\CC$ and we have the lattice determined by the rhombus tiling, then $\phi_{[x,y,a,b]}$ descends to a well-defined object on the compactification of the quotient by the lattice, yielding a meromorphic Higgs field for the trivial rank-$r$ bundle on an elliptic curve $C$, with complex structure determined by the modulus of the cell and punctures along $D=\sum z_i$.  In the hyperbolic case, $\phi_{[x,y,a,b]}$ descends to a meromorphic Higgs field for the trivial rank-$r$ bundle on genus-$g\geq2$ Riemann surface $C$ punctured along $D=\sum z_i$.  The condition$$\sum_{i=1}^n(x^i_{m_i-1}y^i_{m_i-1})_0=\sum_{j=1}^g[b_j,a_j]$$intertwines the residues of $\phi_{[x,y,a,b]}$ at the punctures with the fundamental group of the surface.  The construction of $\phi_{[x,y,a,b]}$ as a Higgs field for the trivial bundle on a genus-$g$ surface makes clear the descriptor ``of genus $g$'' attached to our generalized hyperpolygons.

Note that two different representatives of the class $[x,y,a,b]\in\mathcal X^g_{\underline{r^1},\dots,\underline{r^n}}(\underline\alpha)$ differ by an element $g\in\mbox{GL}(r,\mathbb C)$.  At the same time, $g$ transforms $\phi_{[x,y,a,b]}$ by$$\phi_{[x,y,a,b]}\mapsto g^{-1}\phi_{[x,y,a,b]}g,$$which is precisely the notion of equivalence for Higgs fields for the trivial rank-$r$ bundle.  As in the $g=0$ case, we can also prove that genericity of $\underline\alpha$ corresponds with $\underline\beta$-stability for a parabolic structure on $E$ induced by the flags and $\underline\alpha$.  As the weights are unimportant to us,  the meromorphic Higgs bundle $(E,\phi_{[x,y,a,b]})$ can be transformed by clearing denominators into a so-called ``twisted'' Higgs bundle or ``Hitchin pair'' $(E,\widetilde\phi_{[x,y,a,b]})$ (i.e. where the Higgs field is of the form $\phi:E\to E\otimes K\otimes L$ for some holomorphic line bundle $L$).  Such objects enjoy the same notion of equivalence but are subject to the usual Mumford-Hitchin slope stability (as opposed to parabolic stability).  Furthermore, because the underlying bundle $E$ is trivial, the resulting twisted Higgs bundle is automatically semistable.  This can be packaged into the following result:

\begin{theorem}\label{ThmEmbed} Let the map $[x,y,a,b]\mapsto[\phi_{[x,y,a,b]}]$ and the quotient Riemann surface $C$ be as above.  The map embeds $X^g_{\underline{r^1},\dots,\underline{r^n}}(\underline\alpha)$ into the locus of the moduli space of rank-$r$, degree-$0$, slope-semistable twisted Higgs bundles on $C$ consisting of pairs $(E,\widetilde\phi_{[x,y,a,b]})$ in which $E$ is holomorphically trivial.\end{theorem}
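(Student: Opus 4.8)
The plan is to prove that the assignment $[x,y,a,b]\mapsto[\phi_{[x,y,a,b]}]$ is a well-defined, injective morphism of quasi-projective varieties with injective differential, so that it realizes $\mathcal X^g_{\underline{r^1},\dots,\underline{r^n}}(\underline\alpha)$ as a locally closed subvariety of the moduli of twisted Higgs bundles. The well-definedness and the description of the target locus follow readily from the construction preceding the statement; the genuine work lies in injectivity and in the infinitesimal (immersion) claim.

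First I would confirm that the image lands in the asserted locus. By construction $\phi_{[x,y,a,b]}$ is a twisted Higgs field on the holomorphically trivial bundle $E=\mathcal O_C^{\oplus r}$, of rank $r$ and degree $0$. Semistability uses only the triviality of $E$: if $F\subseteq E$ is a $\widetilde\phi$-invariant subbundle of rank $k$, then $\det F$ embeds as a line subsheaf of $\wedge^k\mathcal O_C^{\oplus r}\cong\mathcal O_C^{\oplus\binom{r}{k}}$, forcing $\deg F\le 0$, so that $\mu(F)=\deg F/\rank F\le 0=\mu(E)$. Thus every invariant subbundle obeys the slope inequality and $(E,\widetilde\phi_{[x,y,a,b]})$ is slope-semistable. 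Well-definedness then rests on the observation, already recorded above, that the only data entering $\phi$ --- the residue products $(x^i_{m_i-1}y^i_{m_i-1})_0$ and the commutators $[a_j,b_j]$ --- are invariant under every non-central factor of $G^{\mathbb C}$ (the arm gauge groups and the outer scalings $(\mathbb C^\times)^n$ cancel in these products), while the central $\SL(r,\mathbb C)$ acts on them by simultaneous conjugation, which is exactly the equivalence relation for Higgs fields on the trivial bundle.

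The crux, and the step I expect to be the main obstacle, is injectivity. Since $C$ is compact and connected and $E$ is trivial, $\mathrm{Aut}(E)=\GL(r,\mathbb C)$ consists of constant matrices, so $[\phi_{[x,y,a,b]}]=[\phi_{[x',y',a',b']}]$ forces $\widetilde\phi'=g^{-1}\widetilde\phi g$ for a single constant $g\in\GL(r,\mathbb C)$. Comparing residues at the distinct punctures $z_i$ gives $({x'}^i_{m_i-1}{y'}^i_{m_i-1})_0=g^{-1}(x^i_{m_i-1}y^i_{m_i-1})_0\,g$ for each $i$ separately, while the monodromy of the associated flat connection around the handle generators of $\pi_1(C\setminus D)$ yields $[a'_j,b'_j]=g^{-1}[a_j,b_j]g$. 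The task is then to recover the whole quiver representation from this conjugacy data, up to the non-central gauge group, so that $g$ upgrades to an isomorphism of hyperpolygons. For a minimal arm the residue is a matrix of rank $\le 1$, and the pair $(x^i_1,y^i_1)$ is determined by it up to the scaling $x^i_1\mapsto\lambda x^i_1,\ y^i_1\mapsto\lambda^{-1}y^i_1$; the modulus of $\lambda$ is pinned by $|x^i_1|^2-|y^i_1|^2=\alpha_i$ and the residual phase is precisely the quotiented outer $\mathrm{U}(1)$. For a complete (or general) arm I would use that $\underline\alpha$-genericity forces the residue to be nilpotent of order equal to the arm length --- a regular nilpotent in the complete case --- so that the Springer-type map $(x,y)\mapsto(x_{m_i-1}y_{m_i-1})_0$ on $T^*\mathcal F_{\underline{r^i}}$ is injective there and the full flag, hence the arm representation up to its arm group, is reconstructed from the flag of kernels of the residue. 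Assembling these reconstructions shows that $g$ conjugates $(x,y,a,b)$ into $(x',y',a',b')$.

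To promote this injective morphism to an embedding I would finally verify that its differential is injective: a first-order deformation of a hyperpolygon fixing $[\phi_{[x,y,a,b]}]$ must be tangent to the $G^{\mathbb C}$-orbit, which follows by differentiating the reconstruction above, using that the residue/Springer correspondence is an immersion on the $\underline\alpha$-generic locus. The persistent difficulty throughout is the complete-arm case: one must check that $\underline\alpha$-genericity together with semistability genuinely confines each residue to the regular nilpotent orbit (so the Springer fibers are points and no flag data is lost), and that the handle data $(a_j,b_j)$ is faithfully recorded by the off-puncture part of $\phi$; the minimal-arm situation, already settled for $g=0$ in the cited work, is by comparison transparent.
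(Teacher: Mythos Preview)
The paper gives no separate proof of this theorem: the statement simply packages the preceding discussion, which covers the construction of $\phi_{[x,y,a,b]}$ on the quotient surface, its well-definedness under the $\GL(r,\mathbb C)$-gauge at the central node, and automatic slope-semistability from the triviality of $E$.  Your arguments for these points agree with (and slightly sharpen) the paper's; in particular your $\det F\hookrightarrow\wedge^k\mathcal O_C^{\oplus r}$ reasoning is exactly what the paper has in mind when it says ``because the underlying bundle $E$ is trivial, the resulting twisted Higgs bundle is automatically semistable.''  For injectivity the paper only cites the $g=0$, $r=2$ case from \cite{Ale} and otherwise asserts the result.

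Your attempt to supply the injectivity argument for $g\geq 1$ has a genuine gap, and it is one the paper does not fill either.  Re-examine the formula:
\[
\phi_{[x,y,a,b]}(z)\;=\;\sum_{i=1}^n\frac{(x^i_{m_i-1}y^i_{m_i-1})_0}{z-\iota_z(z_i)}\,dz
\]
does not involve the loop data $(a_j,b_j)$ at all --- those enter only through the single constraint $\sum_i(x^i_{m_i-1}y^i_{m_i-1})_0=-\sum_j[a_j,b_j]$ of equation~(I).  Your assertion that ``the monodromy of the associated flat connection around the handle generators yields $[a'_j,b'_j]=g^{-1}[a_j,b_j]g$'' is unfounded: there is no flat connection in play (a twisted Higgs field is not a connection, and no nonabelian Hodge correspondence has been invoked), and $\phi_{[x,y,a,b]}$ records neither the individual commutators $[a_j,b_j]$ nor, a fortiori, the pairs $(a_j,b_j)$ themselves.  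Concretely, for $g\geq 1$ one may hold $(x,y)$ fixed and deform $(a,b)$ within the positive-dimensional fibre of the combined moment-map constraints modulo $\mbox{SU}(r)$-conjugation, producing non-isomorphic hyperpolygons with identical Higgs field.  You rightly flag the handle data as a ``persistent difficulty'' in your closing sentence, but as the map is written this is not a difficulty to be overcome by further argument: the construction simply forgets the loop representations, so injectivity in the form you pursue cannot hold without amending the definition of $\phi_{[x,y,a,b]}$.  (A secondary point: your claim that $\underline\alpha$-genericity forces each complete-arm residue to be \emph{regular} nilpotent --- so that the Springer fibre is a point and the flag is recoverable --- is plausible but is not argued here or in the paper, which only records nilpotency of order at most the arm length.)
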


\subsection{Comparison in the $\widetilde{\mbox{D}}_4$ case}

For an illuminating example of the relationship between hyperpolygons, Higgs bundles, and their moduli spaces, we consider the case where $g=0,n=4$ and each flag is of the form $\underline r^i=(1,2)=[2]$.  Then, $\mathcal X^0_{[2],[2],[2],[2]}(\underline\alpha)$ is the Nakajima quiver variety for the affine Dynkin diagram $\widetilde{\mbox{D}}_4$, which is the star-shaped quiver with $4$ outer nodes.  Furthermore, its labelling is such that the complex dimension of the quotient is $2$.  Hence, by the aforementioned McKay-Kronheimer-Nakajima correspondence, $\mathcal X^0_{[2],[2],[2],[2]}(\underline\alpha)$ is a noncompact, complete hyperk\"ahler $4$-manifold admitting the structure of an ALE gravitational instanton.  Furthermore, by virtue of its being a complex variety, it is also a noncompact K3 surface.  In this case, the associated Higgs bundles are parabolic of rank $2$ on $\mathbb P^1$ with $4$ tame punctures. Their stable moduli space is well known to be a noncompact K3 surface with a hyperk\"ahler metric --- Hitchin's $L^2$-metric --- which is also complete and  has recently been shown to be ALG \cite{laura2020}.  Unlike the Nakajima metric, the Hitchin one arises from an \emph{infinite-dimensional} hyperk\"ahler quotient.  As the two metrics are both already complete, the embedding of moduli spaces cannot be an embedding of hyperk\"ahler varieties (in fact, they are compatible only in the $I$ complex structures).  Also, in the Higgs moduli space, there are Higgs bundles with underlying bundle $E=\mathcal O(1)\oplus\mathcal O(-1)\cong K^{-1/2}\oplus K^{1/2}$, where $K^{1/2}$ is the unique spin structure (i.e. holomorphic square root of $K$) on the line.  These Higgs bundles are the complement of the embedding.  In other words, the difference between $\mathcal X^0_{[2],[2],[2],[2]}(\underline\alpha)$ and the associated parabolic Higgs bundle moduli space is precisely what is known as the \emph{Hitchin section} of the Hitchin fibration.  (See also Section 4.1 ``H3 surfaces'' in \cite{MR3931781} for further discussion of these examples.)

Hence, while there is an embedding of $\mathcal X^0_{[2],[2],[2],[2]}(\underline\alpha)$ into a Higgs moduli space, the metrics are incompatible and one should view the actual geometric relationship in a different way. A more universal point of view is that $\mathcal X^g_{\underline{r^1},\dots,\underline{r^n}}(\underline\alpha)$ is a \emph{linearization} of an appropriately-defined character variety on a punctured, tame genus-$g$ curve, which of course is diffeomorphic to a corresponding Higgs bundle moduli space by nonabelian Hodge theory.  This point of view is made plain in equation (I), which is the linearization of the usual character variety definition on a punctured surface.  The character variety itself should be thought of as a \emph{multiplicative quiver variety}, defined using group-valued moment maps and quasi-Hamiltonian reduction (e.g. \cite{MR2352135,MR3447108}).  The geometry of the multiplicative variety converges less rapidly to a Euclidean geometry at infinity than the corresponding ``additive'' varieties that we are considering.  This is the origin of the ALE / ALG difference in the $\widetilde{\mbox{D}}_4$ example.  (One may also wish to compare $\mathcal X^0_{[2],[2],[2],[2],[2]}(\underline\alpha)$ to the $5$-punctured case in \cite{MR3931793}.)  The topic of the asymptotic geometry of the Nakajima metric on hyperpolygon spaces and how it compares to the Hitchin metric is the topic of forthcoming work of the first named author and H. Wei\ss.

\section{The integrable system}\label{gen3}

The relationship of hyperpolygons to Higgs bundles raises the question of whether $\mathcal X^g_{\underline{r^1},\dots,\underline{r^n}}(\underline\alpha)$ possesses an integrable system, in the spirit of the Higgs bundle moduli space \cite{N2}.  It is generally expected that Nakajima quiver varieties ought to be algebraically completely integrable Hamiltonian systems with a Hitchin-like fibration (cf. the commentary at the end of \cite{HN}).  For the minimal-flag moduli space $\mathcal X^0_{[1,r],\dots,[1,r]}(\underline\alpha)$ for $r\leq3$ and arbitrary $n$, this was proven explicitly in Section 4 of \cite{steve} by embedding these spaces into an associated tame parabolic Higgs bundle moduli space on the punctured sphere.  For $r=3$, the minimality of the flags means that image of the embedding $[x,y]\mapsto[\phi_{[x,y]}]$ lies in a non-generic locus of the associated Hitchin system where the derivative of the Hitchin map drops in rank and thus extra analysis is required to demonstrate that the image constitutes a sub-integrable system (cf. \cite{hitchin2019sub} for further inquiries along this theme in the context of the Hitchin system).

Owing to the map $[x,y,a,b]\mapsto[\phi_{[x,y,a,b]}]$ one way to proceed, as in \cite{steve}, is as follows: to equip $\mathcal X^g_{\underline{r^1},\dots,\underline{r^n}}(\underline\alpha)$ with appropriate Hamiltonians, we take the real and imaginary parts of the components of the characteristic polynomial of the one-form-valued endomorphism $\phi_{[x,y,a,b]}$.  This equips $\mathcal X^g_{\underline{r^1},\dots,\underline{r^n}}(\underline\alpha)$ with the Hitchin Hamiltonians coming from the $I$ complex structure on the corresponding parabolic Higgs bundle moduli space (cf. \cite{MR2746468} for the non-strict parabolic case and \cite{MR3815160} for the strict case, for instance). Equivalently, we can again clear the denominators in $\phi_{[x,y,a,b]}$ and produce an associated matrix-valued polynomial $\widetilde\phi_{[x,y,a,b]}$, whose characteristic coefficients we then extract. This equips $\mathcal X^g_{\underline{r^1},\dots,\underline{r^n}}(\underline\alpha)$ with the Beauville-Markman Hamiltonians \cite{MR1300764} from the moduli space of twisted Higgs bundles.  In the case of the complete comet, the integrability is assured because the embedding of moduli spaces intersects \emph{every} Hitchin fibre in parabolic Higgs moduli space and so the Hamiltonians remain globally independent.  Here, the codomain of the Hamiltonians is$$B=\bigoplus_{i=2}^rH^0(X,K^{\otimes i}(D^{i-1})),$$ where $X$ and $D$ are the Riemann surface and divisor constructed in the previous section.  The affine space $B$ is in fact the base of the corresponding Hitchin fibration for the strictly parabolic Higgs bundle moduli space with complete flags at the punctures.   In the event of a flag that is incomplete, then the residues will not be generic and only certain non-generic Hitchin fibres (corresponding to singular spectral curves) will intersect the image of the embedding.  In this case, one has to establish functional independence by some technique, such as the algebraic disingularization technique in Section 4 of \cite{steve} used for minimal flags.  In this case of the minimal comet, we have$$B=\bigoplus_{i=2}^rH^0(X,K^{\otimes i}(D)).$$

In some sense, it is more satisfying to have a description of integrability of $\mathcal X^g_{\underline{r^1},\dots,\underline{r^n}}(\underline\alpha)$ that \emph{does not} rely upon an embedding into a Hitchin moduli space, as the embedding necessitates a choice of marked Riemann surface.  Put differently, there ought to be an \emph{intrinsic} set of Hamiltonians on $\mathcal X^g_{\underline{r^1},\dots,\underline{r^n}}(\underline\alpha)$.   We accomplish this in the case of the complete and minimal comets by appealing to the Gelfand-Tsetlin integrable system on each $T^*\mathcal F_{\underline{r^i}}$.

\begin{theorem}\label{ThmIntegrable} When each arm of the comet quiver is either complete or minimal, the moduli space $\mathcal X^g_{\underline{r^1},\dots,\underline{r^n}}(\underline\alpha)$ is an algebraically completely integrable Hamiltonian system of Gelfand-Tsetlin type with Hamiltonians depending only on the data $[x,y,a,b]$ of a representation.\end{theorem}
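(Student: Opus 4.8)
The plan is to realize $\mathcal X^g_{\underline{r^1},\dots,\underline{r^n}}(\underline\alpha)$ as a holomorphic-symplectic (equivalently, by the Kempf--Ness theorem invoked after Definition \ref{DefnQV}, GIT) reduction of the product
\[
M \;:=\; \prod_{i=1}^n T^*\mathcal F_{\underline{r^i}} \times \big(T^*\mathfrak{sl}(r,\mathbb C)\big)^g
\]
by the diagonal $\mathrm{SL}(r,\mathbb C)$-action at complex moment level $0$, working throughout in the complex structure $I$ where the relevant form is the holomorphic $\omega_J+i\omega_K$ and $\underline\alpha$ is taken generic so that the quotient is smooth. On each arm factor I would invoke the \emph{complexified} Gelfand--Tsetlin system: $T^*\mathcal F_{\underline{r^i}}$ is a complex coadjoint-type orbit, and the chain $\mathfrak{gl}(1)\subset\cdots\subset\mathfrak{gl}(r)$ produces, via the characteristic coefficients of the successive corner submatrices of its $\mathfrak{gl}(r)$-moment image, a family of $f_{\underline{r^i}}$ Poisson-commuting holomorphic functions --- exactly half of $\dim_{\mathbb C}T^*\mathcal F_{\underline{r^i}}=2f_{\underline{r^i}}$. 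For a complete arm these are the $\tfrac{r(r-1)}{2}$ corner coefficients; for a minimal arm they reduce to the $r-1$ partial traces $\sum_{\ell\le k}(x^i_1)_\ell (y^i_1)_\ell$, reflecting that the side $(x^i_{m_i-1}y^i_{m_i-1})_0$ is rank one.

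The decisive step is descent. The bare corner functions are not invariant under the residual diagonal $\mathrm{SL}(r,\mathbb C)$, so they do not define functions of $[x,y,a,b]$; only $\mathrm{SL}(r,\mathbb C)$-invariant combinations survive on the quotient. I would resolve this by assembling all the data into a single Thimm chain: ordering the arms and loops, form the partial sums $P_k=\sum_{i\le k}(x^i_{m_i-1}y^i_{m_i-1})_0+\sum_{j\le k'}[a_j,b_j]$ of the Higgs-polygon sides and take their spectral invariants $\Tr(P_k^\ell)$. Because conjugation acts simultaneously on every side, these ``bending'' Hamiltonians are manifestly $\mathrm{SL}(r,\mathbb C)$-invariant and descend to honest holomorphic functions of $[x,y,a,b]$; the internal arm functions are then realized invariantly as mixed traces of words in the $x^i,y^i$ (first fundamental theorem of invariant theory) taken relative to the frame pinned down by the total configuration. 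The Thimm mechanism (Casimirs of nested diagonal subgroups Poisson-commute) shows the $\Tr(P_k^\ell)$ commute among themselves and with the internal Gelfand--Tsetlin functions of each arm. This is precisely the place where one must ``pass back and forth between the symplectic and the geometric quotient'': I would compute and verify commutativity of the Hamiltonians holomorphically on the $\mathrm{SL}(r,\mathbb C)$-side, while transporting the Lagrangian-fibration structure back to the hyperk\"ahler quotient.

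With a commuting family in hand, I would check that its $\mathrm{SL}(r,\mathbb C)$-invariant members are functionally independent on a dense open set and number exactly $\sum_{i=1}^n f_{\underline{r^i}}+(g-1)(r^2-1)=\tfrac12\dim_{\mathbb C}\mathcal X^g_{\underline{r^1},\dots,\underline{r^n}}(\underline\alpha)$, the value read off from Proposition \ref{PropDimP}; the loop data enter only through the partial sums $[a_j,b_j]$, and the subtraction of one copy of $r^2-1$ encodes the diagonal reduction together with the closure constraint $\sum_i(x^i_{m_i-1}y^i_{m_i-1})_0=\sum_j[b_j,a_j]$. Finally, to upgrade ``completely integrable'' to ``algebraically completely integrable,'' I would identify the generic common level set with an abelian variety on which the flows linearize, most efficiently by composing with the embedding of Theorem \ref{ThmEmbed} into the twisted Hitchin system: the generic Hitchin fibres are Jacobian/Prym abelian varieties, and it remains to verify that the Gelfand--Tsetlin Hamiltonians are functions of the Hitchin (Beauville--Markman) Hamiltonians along the image.

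The main obstacle is entirely concentrated in the descent-and-counting step, and it is twofold: producing the internal arm Hamiltonians as genuinely $\mathrm{SL}(r,\mathbb C)$-invariant functions that \emph{still} Poisson-commute after reduction (the naive corner functions are not invariant, so bending invariants alone are insufficient once $r\ge 3$), and establishing functional independence of the combined family on a dense open subset. The complete-versus-minimal dichotomy is exactly what makes the count clean: it fixes the ranks, hence the nilpotency orders, of the sides $(x^i_{m_i-1}y^i_{m_i-1})_0$, controlling how many corner and partial-sum invariants genuinely vary rather than collapsing to constants. Intermediate flag types are excluded precisely because their residues are neither generic nor minimally nilpotent, so the invariant-theoretic count no longer matches half the dimension without further case analysis.
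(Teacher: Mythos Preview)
Your route diverges from the paper's, and it carries a genuine gap in the loop sector. The paper does \emph{not} construct manifestly $\mathrm{SL}(r,\mathbb C)$-invariant Hamiltonians via bending. Instead it takes, before reduction, the Gelfand--Tsetlin functions $h^i_{j,k}$ built from the intermediate nilpotent matrices $(x^i_{k-1}y^i_{k-1})_0$ along each arm, together with the \emph{raw linear momenta} $b^j_{p,q}$ on each $T^*\mathfrak{sl}(r,\mathbb C)$ factor (not the commutators $[a_j,b_j]$). It then counts dependencies by a slice argument on the geometric quotient: for $g\geq 1$, the $\mathrm{SL}(r,\mathbb C)$-freedom is spent normalizing $a^1$, after which equation (I) determines $b^1$, so exactly the $r^2-1$ functions $b^1_{p,q}$ drop out; for $g=0$ the freedom is spent on the first few arms instead. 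Your worry that the arm GT functions fail to be invariant is also largely unnecessary: the residual $\mathrm{SL}(r,\mathbb C)$ acts only through the central node, i.e.\ only on $x^i_{m_i-1}$ and $y^i_{m_i-1}$, so the internal functions at levels $k<m_i$ are automatically invariant and require no Thimm replacement.

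The concrete gap is your assertion that ``the loop data enter only through the partial sums $[a_j,b_j]$.'' This is fatal for $g\geq 2$. Spectral invariants $\Tr(P_k^\ell)$ of partial sums built from the sides $(x^i_{m_i-1}y^i_{m_i-1})_0$ and the commutators $[a_j,b_j]$ yield at most on the order of $(n+g)(r-1)$ independent functions, whereas the loop contribution alone must supply $(g-1)(r^2-1)$; one checks $(g-1)(r^2-1)>g(r-1)$ as soon as $g\geq 2$ and $r\geq 2$. The map $(a_j,b_j)\mapsto[a_j,b_j]$ discards half of each cotangent fibre, and no amount of bending on the Higgs polygon recovers it. The paper's loop Hamiltonians are the entries $b^j_{p,q}$ themselves --- $r^2-1$ per loop, Poisson-commuting trivially with everything --- which are not conjugation-invariant but become well-defined on the quotient once the slice is chosen. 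If you insist on a manifestly invariant presentation you would need traces of arbitrary words in the $a_j,b_j$ and the arm moment-images, and verifying independence of that family is a substantially harder problem than the one the paper actually solves.
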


\begin{proof}   First, note that the quotient map corresponding to the reduction$$\left(T^*\mathcal F_{\underline{r^1}}(\alpha_1)\times\cdots\times T^*\mathcal F_{\underline{r^n}}(\alpha_n)\right)\times T^*\mathfrak{sl}(r,\mathbb C)^g\to\mathcal X^g_{\underline{r^1},\dots,\underline{r^n}}(\underline\alpha)$$is a Poisson morphism, and so the Gelfand-Tsetlin Poisson structures on the phase spaces $T^*\mathcal F_{\underline{r^i}}$ and the standard Lie-Poisson structure on $T^*\mathfrak{sl}(r,\mathbb C)^g$ descend to a well-defined one on $\mathcal X^g_{\underline{r^1},\dots,\underline{r^n}}(\underline\alpha)$, which we identify with the one arising from $\omega_I$. (Note that these are all incantations of the Lie-Poisson structure on $\mathfrak{sl}(r,\mathbb C)$, as the $T^*\mathcal F_{\underline{r^i}}$ are resolutions of closures of nilpotent orbits in $\mathfrak{sl}(r,\mathbb C)$, which are singular affine Poisson varieties).  In particular, the Hamiltonians on each summand descend to the quotient and Poisson commute with regards to the quotient Poisson structure.  What needs to be accounted for is dependency in the quotient Hamiltonians.  

Suppose the $i$-th arm is complete.  Then there is a sequence of trace-free matrices $(x^i_{k-1}y^i_{k-1})_0$ of size $k\times k$, $k=2,..,m$.  The moment map conditions (I)-(III) force the $k\times k$ matrix $(x^i_{k-1}y^i_{k-1})_0$ to be nilpotent of order $k$, and so can be put into a form where it has an upper (or lower) block of size $(k-1)\times(k-1)$ which in general is not nilpotent.  We denote this block by $b^i_{k-1}k$.  Each of these blocks contributes $k-1$ characteristic coefficients.  We think of these as traces $t_j$, where the trace $t_{k-1}$ is identified with the determinant of $b^i_{k-1}$ and so $t_1$ is the ordinary trace.  We can index these invariants as maps$$h^i_{j,k}:(x^i_{k-1}y^i_{k-1})_0\mapsto t_j(b^i_{k-1})$$for $j=1,\dots,k-1$.  These traces are a well-known complete set of complex-valued Hamiltonians for the Gelfand-Tsetlin integrable system for $T^*\mathcal F_{[r]}$.  The total number of generally nonzero functions of this form is $1+2+\dots+(r-1)=r(r-1)/2$, which is the complex dimension of $\mathcal F_{[r]}$, as expected.

When the $i$-th arm is minimal rather than complete, then the arm has associated to it a single $r\times r$ matrix $(x^i_{m_1-1}y^i_{m_i-1})_0=(x^i_1y^i_1)_0$ that is nilpotent of order $2$.  A nontrivial complex-valued Hamiltonian function is obtained from the entry in the top-right (or bottom-left) corner of this matrix.  This can be completed to a set of $r-1$ Poisson-commuting functions for the Gelfand-Tsetlin system on $T^*\mathcal F_{(1,r)}=T^*\mathbb P^{r-1}$, whose Hamiltonians we do not make explicit although their existence can be guaranteed in spite of the irregularity of the associated nilpotent orbit (see for instance \cite{Panyushev2019}).  In particular, we have exactly as many independent invariants as the complex dimension of $\mathcal F_{(1,r)}=\mathbb P^{r-1}$.

Lastly, the $j$-th copy of $T^*\mathfrak{sl}(r,\mathbb C)$ is coordinatized by position-momentum variables $(a^j,b^j)$. Prior to any reduction of this variety, the Hamiltonians are the $r^2-1$ independent entries $b^j_{p,q}$ of the matrices $b^j$ themselves.  As there are $g$ of these summands, we have a total of $g(r^2-1)$ invariants.

Hence, the quotient system on $\mathcal X^g_{\underline{r^1},\dots,\underline{r^n}}(\underline\alpha)$ has \emph{at most}$$cr(r-1)/2+(n-c)(r-1)+g(r^2-1)$$nontrivial functionally-independent Hamiltonians, where $c$ is the number of complete arms and $n-c$ is the number of minimal arms.  Now, consider the final reduction yielding $\mathcal X^g_{\underline{r^1},\dots,\underline{r^n}}(\underline\alpha)$ as a geometric quotient by $\mbox{SL}(r,\mathbb C)$.  This means that we take the complex quotient $\nu_\star^{-1}(0)/\mbox{SL}(r,\mathbb C)$.  We divide the remainder of the proof into two cases.

Consider $g=0$ first.  Note that the matrices $(x^i_{m_1-1}y^i_{m_i-1})_0$ along the first $i=1,\dots,r$ arms can all be fixed via the multiplication action of $\mbox{SL}(r,\mathbb C)$ on the central node.  (In order for the dimension of the moduli space to be nonnegative in this case, we need $n$ to be at least $r+1$ anyway.)  Moreover, the data $(x^{r+1}_{m_{r+1}-1}y^{r+1}_{m_{r+1}-1})_0$ along the $(r+1)$-th arm is dependent on the other arms by the complex hyperpolygon equations (I)-(III).  Hence, we subtract $(r+1)(r-1)=r^2-1$ from the upper-bound in each case to get the exact number of Hamiltonians.

We turn now to $g\geq1$.  Here, the multiplication action fixes the matrix $a^1\in\mbox{sl}(r,\mathbb C)$ and, subsequently, $b^1\in\mbox{sl}(r,\mathbb C)$ is completely determined by equation (I).   As a result, the invariants $b^1_{p,q}$ all become functionally dependent on the $x$ and $y$ data of the arms, which are untouched by the action.  Hence, we remove $r^2-1$ from the total.

As a final tally, the total number of independent Hamiltonians becomes$$cr(r-1)/2+(n-c)(r-1)+(g-1)(r^2-1).$$This is precisely the dimension of $\mathcal P^g_{\underline{r^1},\dots,\underline{r^n}}(\underline\alpha)$ when there are exactly $c$ complete arms and $n-c$ minimal ones, as per Proposition \eqref{PropDimP}.  We therefore have the desired maximal set of Poisson-commuting, functionally-independent Hamiltonians for the induced Poisson structure.\end{proof}

Immediately, we have:

\begin{corollary}\label{CorPresent} When the comet is complete, a maximal set of independent Gelfand-Tsetlin Hamiltonians is given by\begin{itemize}\item $h^i_{j,k}:(x^i_{k-1}y^i_{k-1})_0\mapsto t_j(b^i_{k-1})$ for $j=1,\dots,k-1$, $k=2,\dots,m$, $i=r+2,\dots n$ when $g=0$;\item $h^i_{j,k}:(x^i_{k-1}y^i_{k-1})_0\mapsto t_j(b^i_{k-1})$ for $j=1,\dots,k-1$, $k=2,\dots,m$, $i=1,\dots n$ when $g=1$;  \item $h^i_{j,k}:(x^i_{k-1}y^i_{k-1})_0\mapsto t_j(b^i_{k-1})$ for $j=1,\dots,k-1$, $k=2,\dots,m$, $i=1,\dots n$ and $b^j_{p,q}$ for $j=2,\dots,g$, $1\leq p,q\leq r$ (but omit $b^j_{r,r})$ when $g>1$.\end{itemize}\end{corollary}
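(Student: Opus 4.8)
The plan is to treat Corollary~\ref{CorPresent} as a naming exercise downstream of Theorem~\ref{ThmIntegrable}: that theorem already supplies Poisson-commuting, functionally-independent Hamiltonians in the maximal number $cr(r-1)/2+(n-c)(r-1)+(g-1)(r^2-1)$, so for the complete comet ($c=n$) it remains only to exhibit an explicit subset of the Gelfand-Tsetlin functions $h^i_{j,k}$ and the loop coordinates $b^j_{p,q}$ that realizes this count in each genus. First I would recall the pool of invariants available before the final reduction: on each arm's phase space $T^*\mathcal F_{[r]}$ the $r(r-1)/2$ functions $h^i_{j,k}\colon (x^i_{k-1}y^i_{k-1})_0\mapsto t_j(b^i_{k-1})$ for $1\le j\le k-1$, $2\le k\le m$, and on each copy of $T^*\mathfrak{sl}(r,\mathbb{C})$ the $r^2-1$ entries $b^j_{p,q}$. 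Independence of the Gelfand-Tsetlin block on a single arm is classical, the loop entries are literal coordinates, and since distinct summands contribute functions of disjoint variables they Poisson-commute; by the Poisson property of the reduction map established in the proof of Theorem~\ref{ThmIntegrable}, both commutativity and the relevant independence descend to $\mathcal X^g_{\underline{r^1},\dots,\underline{r^n}}(\underline\alpha)$.

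The genuine content is to reproduce the final $\mbox{SL}(r,\mathbb{C})$-reduction of that proof and decide which invariants survive. For $g\ge 1$ the argument is clean and I would follow it verbatim: the conjugation action normalizes $a^1$, equation (I) then determines $b^1$ as a function of the remaining data, so all $r^2-1$ entries $b^1_{p,q}$ become dependent and are discarded, while the trace-free condition on each retained loop $\mathfrak{sl}(r,\mathbb{C})$ accounts for the omission of $b^j_{r,r}$. Retaining every arm Hamiltonian $h^i_{j,k}$ ($i=1,\dots,n$) together with $b^j_{p,q}$ for $j=2,\dots,g$ and $(p,q)\neq(r,r)$ gives $n\,r(r-1)/2+(g-1)(r^2-1)$ functions when $g>1$, and $n\,r(r-1)/2$ functions when $g=1$ (no loop entries surviving); in both cases this matches $\dim_{\mathbb{C}}\mathcal P^g_{\underline{r^1},\dots,\underline{r^n}}(\underline\alpha)$ from Proposition~\ref{PropDimP}, so maximality follows.

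For $g=0$ the intended route is to spend the central $\mbox{SL}(r,\mathbb{C})$ freedom normalizing the residue matrices $(x^i_{m_i-1}y^i_{m_i-1})_0$ along the leading arms and to invoke equation (I) to render the residue on the next arm dependent, so that the independent family is the one carried by the remaining arms $i=r+2,\dots,n$. I expect this to be the main obstacle, and the step I would treat most carefully, precisely because the residues here are \emph{regular nilpotent} $r\times r$ matrices whose conjugation stabilizer in $\mbox{SL}(r,\mathbb{C})$ is $(r-1)$-dimensional; normalizing one such residue therefore consumes $r^2-r$ of the $r^2-1$ gauge dimensions rather than a full Gelfand-Tsetlin block, so for $r>2$ the reduction cannot literally remove whole arms and one must track the partial normalization permitted by the residual stabilizer. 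The honest argument must account for these stabilizers so that exactly $r^2-1$ functions are eliminated, and then confirm functional independence of the surviving family on the open dense locus where the reduction is a submersion, for instance by checking that their differentials remain linearly independent at a generic representative; only after this stabilizer bookkeeping is the reduction to the stated explicit list justified.
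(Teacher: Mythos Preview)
Your handling of $g\ge 1$ is correct and is precisely what the paper intends: the corollary is stated with no separate proof (``Immediately, we have''), so one simply reads off the surviving invariants from the $g\ge 1$ paragraph of the proof of Theorem~\ref{ThmIntegrable}, exactly as you do. The counts check.

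Your instinct that the $g=0$ case conceals a real difficulty is right, but the obstruction is sharper than stabilizer bookkeeping and cannot be repaired in the direction you propose. Simply count the functions in the stated list: each retained arm contributes $\sum_{k=2}^{r}(k-1)=r(r-1)/2$ Hamiltonians and there are $n-r-1$ retained arms, giving $(n-r-1)\,r(r-1)/2$ in total. The target is $\dim_{\mathbb C}\mathcal P^0_{[r],\dots,[r]}(\underline\alpha)=n\,r(r-1)/2-(r^2-1)$. Equating these forces $(r+1)\,r(r-1)/2=(r+1)(r-1)$, i.e.\ $r=2$. For $r>2$ the stated $g=0$ list is short by $(r^2-1)(r-2)/2$ functions, so it cannot be a maximal independent set regardless of how carefully one tracks stabilizers; no ``reduction to the stated explicit list'' is possible.

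The source of the discrepancy is already visible in the $g=0$ paragraph of the proof of Theorem~\ref{ThmIntegrable}: what is normalized there are only the \emph{top} residue matrices $(x^i_{m_i-1}y^i_{m_i-1})_0$ on arms $i=1,\dots,r$, and equation (I) then constrains only the top residue on arm $r+1$. This renders the top-level invariants $h^i_{j,m}$ (with $j=1,\dots,r-1$) dependent on those $r+1$ arms --- exactly $(r+1)(r-1)=r^2-1$ functions --- while the deeper invariants $h^i_{j,k}$ with $k<m$ on those same arms are untouched by the central $\mbox{SL}(r,\mathbb C)$-action (which sees only the innermost maps $x^i_{m_i-1},y^i_{m_i-1}$). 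A corrected explicit list for $g=0$ should therefore retain all $h^i_{j,k}$ with $k<m$ on every arm and restrict only the top level $k=m$ to $i=r+2,\dots,n$; the list as printed coincides with this only when $r=2$. You were right to flag the step, but the resolution is to amend the list, not to supply the missing argument for it.
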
 

Finally, it is worth noting that, when there is at least one arm that is neither complete nor minimal, the combinatorics do not necessarily align in an obvious way to produce the correct half-dimensional result.
  
\section{Further directions}\label{final}

\subsection{Mirror symmetry and triple branes}

Here, we explore a different side of the physics of hyperpolygon spaces --- namely mirror symmetry.  Given any hyperk\"ahler variety with quaternions $I$, $J$ and $K$ and respective symplectic forms $\omega_I$, $\omega_J$ and $\omega_K$, we may ask how a given subvariety is compatible with those structures. Borrowing terminology from string theory, we refer to a Lagrangian subvariety with respect to one of the symplectic forms as an {\it $A$-brane} and to a complex subvariety with respect to one of the complex structures as a {\it $B$-brane}. Accordingly, when considering the whole hyperk\"ahler structure of the hyperk\"ahler variety one can seek subvarieties that are $(B,B,B)$, $(A,B,A)$, $(B,A,A)$, or $(A,A,B)$ with respect to the complex structures $(I,J,K)$ and the associated symplectic forms.  We call these ``triple branes'' in general.

For the moduli space of Higgs bundles without punctures, triple branes were considered in \cite{Kap}, where this moduli space is the target space for a topological sigma model.  Four-dimensional S-duality for this model corresponds to mirror symmetry between the modui space of $G$-Higgs bundles and the moduli space of $^{\text L}G$-Higgs bundles, where $^{\text L}G$ is the Langlands dual of a complex reductive group $G$.  In particular, triple branes in one moduli space are dual to ones in the mirror.  This observation has inspired the construction of many different types of branes in the Higgs moduli space through finite group actions and through holomorphic and anti-holomorphic involutions (e.g. see \cite{aba,slices,finite}). The involution technique has been adapted to the study of framed instantons in \cite{emilio1}, while the group-action technique has been used to construct triple branes in quiver varieties \cite{vic1,vic2}. 

On the one hand, there exists a convenient characterization for when $A$-branes and $B$-branes arise from involutions (cf. \cite{aba} for instance).  Given an analytic involution $\mathcal I$ on a nonsingular hyperk\"ahler variety, its fixed-point locus $\mathcal F$ is an $A$-brane with respect to the complex structure $I$ if $\mathcal I$ and $I$ anti-commute, that is, if $\mathcal II=-I\mathcal I$.  In other words, $\mathcal F$ is an $A$-brane if the involution is anti-holomorphic with respect to $I$.  On the other hand, $\mathcal F$ is a $B$-brane with respect to $I$ if $\mathcal I$ is holomorphic with respect to $I$, that is, if $\mathcal II=I\mathcal I$.  We can perform the same tests for $\mathcal I$ against $J$ and $K$.  If for each of $I,J,K$ we have that $\mathcal F$ is either $A$-type or $B$-type, then $\mathcal F$ is a triple brane described accordingly as one of $(B,B,B)$ or $(A,B,A)$ and so on.

On the other hand, the method in \cite{vic1,vic2} for producing branes in Nakajima quiver varieties uses quiver automorphisms $\sigma$ of $\mbox{Rep}(\overline{\mathcal Q})$ satisfying certain hypotheses:

\begin{itemize}
\item[(i)] for all $x^i_j$ and $y^i_j$, the automorphism satisfies $\sigma(y^i_j)=\sigma(x^i_j)^*$;
 \item[(ii)] either $\sigma(x^i_j)\in \{x^i_1,\ldots,x^i_{m_i-1}\}$ for all $i$ in which case it is {\it $\overline{\mathcal{Q}}$-symplectic}, or  $\sigma(x^i_j)\in \{y^i_1,\ldots,y^i_{m_{i}-1}\}$ for all $i,j$, in which case it is {\it $\overline{\mathcal{Q}}$-anti-symplectic}. 
\end{itemize}

Turning now to the specific hyperk\"ahler structure on $\mathcal X^g_{\underline{r^1},\dots,\underline{r^n}}(\underline\alpha)$, it is reasonable to ask about the existence of triple branes. We note here that triple branes within hyperpolygon spaces with $r=2$ and $g=0$ were constructed as examples in \cite{vic2} (and some of these were recently studied in some detail in \cite{godinho2019quasi}). To provide an additional example of a brane --- one that does not arise from a $\sigma$ of the type above and which is furthermore valid for an arbitrary comet\footnote{The partial flags allowed in generalized hyperpolygons could potentially fit within the framework of generalized $B$-opers introduced in \cite{genoper}, which yields branes that do not arises from involutions.} --- we consider the involution on $\mathcal X^g_{\underline{r^1},\dots,\underline{r^n}}(\underline\alpha)$ that negates cotangent directions:
\begin{eqnarray}\mathcal{I}_{-}:[x,y,a,b]\mapsto[x,-y,a,-b].\label{i1}\end{eqnarray}
The fixed point locus is given by the hyperpolygons of the form
$[x,0,a,0],$
which is precisely the generalized polygon space $\mathcal P^g_{\underline{r^1},\dots,\underline{r^n}}(\underline\alpha)$.

 \begin{proposition} \label{poly}The polygon space $\mathcal P^g_{\underline{r^1},\dots,\underline{r^n}}(\underline\alpha)$ is a $(B,A,A)$-brane within $\mathcal X^g_{\underline{r^1},\dots,\underline{r^n}}(\underline\alpha)$.
 \end{proposition}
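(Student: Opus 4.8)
The plan is to apply the involution criterion for branes recalled just above: for an analytic involution $\mathcal I$ on the (smooth, for generic $\underline\alpha$) hyperk\"ahler quotient, its fixed locus is a $B$-brane in a given complex structure exactly when $\mathcal I$ commutes with that structure and an $A$-brane exactly when $\mathcal I$ anti-commutes with it. Since the hyperk\"ahler structure on $\mathcal X^g_{\underline{r^1},\dots,\underline{r^n}}(\underline\alpha)$ is inherited from the flat quaternionic structure on $T^*\mbox{Rep}(\mathcal Q)$, and $\mathcal I_-$ is induced by the real-linear map $(x,y,a,b)\mapsto(x,-y,a,-b)$, all three tests can be carried out on the ambient vector space and then pushed down. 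So the first thing I would do is confirm that $\mathcal I_-$ genuinely descends. Here $\mu_\star$ is assembled from the even-degree expressions $xx^*$, $y^*y$, $[a,a^*]$ and $[b,b^*]$, hence is invariant under negating $y$ and $b$; while $\nu_\star$ is assembled from the odd-degree expressions $xy$ and $[a,b]$, hence satisfies $\nu_\star\circ\mathcal I_-=-\nu_\star$, so that $\nu_\star^{-1}(0)$ is preserved. Because the $G$-action is by change of basis it preserves the $x/y/a/b$ grading and commutes with the negation, giving $G$-equivariance. (Equivalently, $\mathcal I_-$ is just the value at $\theta=\pi$ of the Hamiltonian $\mbox{U}(1)$-action $[x,y,a,b]\mapsto[x,e^{i\theta}y,a,e^{i\theta}b]$, which already descends.) Thus $\mathcal I_-$ is a well-defined analytic involution whose fixed locus is the polygon space $\mathcal P^g_{\underline{r^1},\dots,\underline{r^n}}(\underline\alpha)=\{[x,0,a,0]\}$.

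The second step is the three sign computations on $T^*\mbox{Rep}(\mathcal Q)$ using the explicit formulas for $I,J,K$. For $I$ one has $\mathcal I_-\circ I(x,y,a,b)=(ix,-iy,ia,-ib)=I\circ\mathcal I_-(x,y,a,b)$, so $\mathcal I_-I=I\mathcal I_-$ and the fixed locus is a $B$-brane in complex structure $I$. For $J$, applying $J$ then $\mathcal I_-$ gives $(-y^*,-x^*,-b^*,-a^*)$, whereas applying $\mathcal I_-$ then $J$ gives $(y^*,x^*,b^*,a^*)$; these differ by an overall sign, so $\mathcal I_-J=-J\mathcal I_-$ and the fixed locus is an $A$-brane in complex structure $J$. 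The computation for $K$ is identical in form: $\mathcal I_-\circ K$ produces $(-iy^*,-ix^*,-ib^*,-ia^*)$ while $K\circ\mathcal I_-$ produces $(iy^*,ix^*,ib^*,ia^*)$, whence $\mathcal I_-K=-K\mathcal I_-$ and we again obtain an $A$-brane. Assembling the three tests, $\mathcal P^g_{\underline{r^1},\dots,\underline{r^n}}(\underline\alpha)$ is a $(B,A,A)$-brane.

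The computations themselves are immediate, so the only points requiring care are the ones in the first step. The descent is routine hyperk\"ahler-quotient bookkeeping once invariance of the level sets and $G$-equivariance are in hand, and the three commutation relations descend verbatim because both the complex structures and $\mathcal I_-$ on the quotient are induced from their ambient counterparts. The hard part will be the reverse inclusion in identifying the fixed locus, since a priori a class with $(y,b)\neq 0$ could be carried to its negative by a nontrivial gauge transformation; this is the main obstacle. I expect it to be controlled by the genericity of $\underline\alpha$ (which makes the quotient and the circle action well-behaved) together with the observation that on the dense cotangent bundle $T^*\mathcal P^g_{\underline{r^1},\dots,\underline{r^n}}(\underline\alpha)$ the map $\mathcal I_-$ is simply the fiberwise $-1$, whose fixed locus is the zero section $\mathcal P^g_{\underline{r^1},\dots,\underline{r^n}}(\underline\alpha)$.
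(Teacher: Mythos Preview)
Your proposal is correct and follows essentially the same approach as the paper: apply the involution criterion to $\mathcal I_-$ and verify the commutation relations $\mathcal I_-I=I\mathcal I_-$, $\mathcal I_-J=-J\mathcal I_-$, $\mathcal I_-K=-K\mathcal I_-$ on the ambient representation space. You are in fact more careful than the paper, which simply asserts the fixed locus identification and does not address descent; your observation that $\mathcal I_-$ is the $\theta=\pi$ value of the Hamiltonian $\mbox{U}(1)$-action cleanly handles both points, and the concern you flag about the reverse inclusion is not addressed in the paper's proof.
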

\begin{proof} Consider the involution $\mathcal{I}_{-}$ in  \eqref{i1} and the complex structures $I,J,K$ on $\mathcal X^g_{\underline{r^1},\dots,\underline{r^n}}(\underline\alpha)$ defined in \eqref{i}-\eqref{k}.  At any hyperpolygon $[x,y,a,b]$, we can easily check how $\mathcal I_{-}$ behaves with respect to $I,J,K$.  We compute this for $J$ and $K$ here:

\begin{minipage}[t]{0.4\textwidth}

\begin{eqnarray}
J \mathcal{I}_{-}[x,y,a,b]&=&
J[x,-y,a,-b]\nonumber\\&=&
[y^*,x^*,b^*,a^*]\nonumber\\ &=&
\mathcal{I}_{-}[y^*,-x^*,b^*,-a^*]\nonumber\\&=&-\mathcal{I}_{-} J[x, y,a,b]\nonumber\end{eqnarray}

\end{minipage}\begin{minipage}[t]{0.6\textwidth}

\begin{eqnarray}
K \mathcal{I}_{-}[x,y,a,b]&=&K[x,-y,a,-b]\nonumber\\&=&[iy^*,ix^*,ib^*,ia^*]\nonumber\\&=&\mathcal{I}_{-}[iy^*,-ix^*,ib^*,-ia^*]\nonumber\\
&=&\mathcal{I}_{-}K [-x,   -y,-a,-b]\nonumber\\
&=&-\mathcal{I}_{-}K [x,y,a,b]\nonumber
\end{eqnarray}

\end{minipage}

\vspace{10pt}

In other words, $\mathcal I_{-}$ is anti-holomorphic with respect to each of $J$ and $K$.  A repetition of this calculation for $I$ reveals $\mathcal I_{-}$ to be holomorphic in that complex structure.  Thus, by our characterization above, the fixed point set $\mathcal P^g_{\underline{r^1},\dots,\underline{r^n}}(\underline\alpha)$ is a $(B,A,A)$-brane. \end{proof}
 
The reader may wish to compare this result with an analogous one of \cite[Section 3.2]{emilio1}, where a sign involution is studied on quiver varieties but whose fixed point set is indicated to be a $(B,B,B)$-brane. 
  
\subsection{Dualities between tame and wild hyperpolygons}

As an extension of the preceding constructions, we might also allow comet-shaped quivers in which two or more edges $e$ are permitted between two consecutive nodes along an arm (and hence two or more corresponding $-e$ arrows in the doubled quiver).  We describe this as a \emph{wild comet} and describe classes in the resulting quiver variety as \emph{wild hyperpolygons}, and examples of such quivers appear below in Figure \ref{maps23}.  From this point of view, our earlier hyperpolygons would be \emph{tame hyperpolygons} representing a \emph{tame comet}.  Wild hyperpolygons generalize similar objects arising from star-shaped quivers with multiple arrows between consecutive nodes in \cite{tbrane}.

To motivate one final observation, we wish to consider yet again the case where $\mathcal X^g_{\underline{r^1},\dots,\underline{r^n}}(\underline\alpha)$ arises from a tame comet in which each flag is the same --- for example, the complete and minimal comets.  One can note that there is a certain ambiguity here.  We can consider a hyperpolygon $[x,y,a,b]$ as being a represenation a tame comet \emph{or} from a wild comet with a single arm but with $n$-many $x$ arrows and $n$-many $y$ arrows connecting any two consecutive nodes. The wild comet comes about by identifying corresponding nodes of the arms.  We should note that the moment map equations for the tame comet are specializations of the ones for the wild comet and so there is a valid sense in which one quiver variety embeds into the other.  At the level of associated Higgs bundles, we are isolating a locus of wild Higgs bundles with an order-$n$ pole at infinity that is constructed from a tame Higgs bundle with $n$-many order-$1$ poles, simply by rearranging the residues.  The passage in and out of this locus, accomplished by means of quiver mutations, is closely related to degenerations of Painlev\'e equations.\\

\begin{figure}[!h]
\begin{center}
\includegraphics[width=1\linewidth]{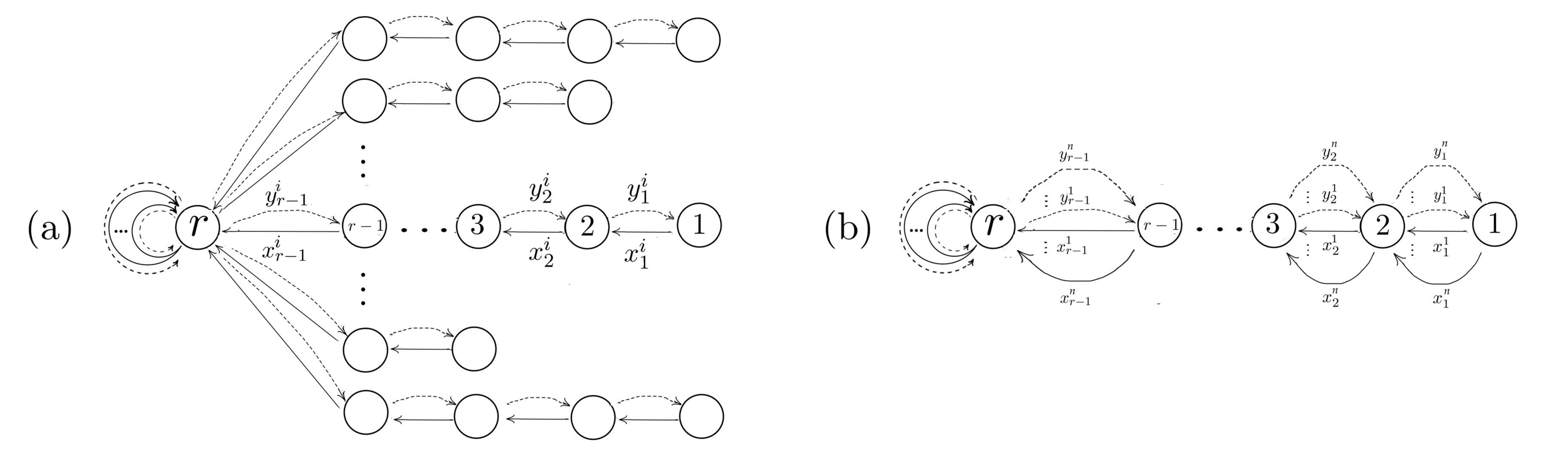}
\caption{(a) An example of a tame comet; (b) an example of a wild comet, whose pole order at the marked point is given by the number of the same direction between two consecutive nodes.}\label{maps23}
\end{center}
\end{figure}

  
  \renewcommand{\baselinestretch}{1.1}
\renewcommand{\refname}{\bfseries{{ \quad References}}}

\bibliography{Hyperpolygons}{}
\bibliographystyle{acm}

\end{document}